\documentclass[12pt]{amsart}
\usepackage{amsmath}
\usepackage{amsfonts}
\usepackage{amssymb}
\usepackage{amscd}
\usepackage{graphicx}
\RequirePackage[dvipsnames,usenames]{color}
\usepackage{soul,xcolor}
\setstcolor{red}
\usepackage{stmaryrd}
\SetSymbolFont{stmry}{bold}{U}{stmry}{m}{n}
\usepackage{mathtools}
\usepackage{booktabs}
\usepackage{multirow}\usepackage{subcaption}

\usepackage{mathdots}
\newtagform{tiny}{\tiny(}{)}

\usepackage{mathtools}
\usepackage{hyperref}
\usepackage[margin=1.25in]{geometry}

\usepackage{amsthm}
\usepackage{comment}
\usepackage[all,cmtip]{xy}
\usepackage{tikz-cd}
\usetikzlibrary{cd}

\usepackage[all]{xy}

\usepackage{etoolbox}
\apptocmd{\sloppy}{\hbadness 10000\relax}{}{}

\usepackage{enumitem}

\title{Local vanishing for F-pure threefolds}
\author{Tatsuro Kawakami}
\address{Graduate School of Mathematical Sciences, University of Tokyo, 3-8-1 Komaba,
Meguro-ku, Tokyo 153-8914, Japan}
\email{tatsurokawakami0@gmail.com}

\def\phi{\varphi}
\def\epsilon{\varepsilon}

\def\mapsto{\longmapsto}

\def\log{\operatorname{log}}

\def\Spec{\operatorname{Spec}}

\def\Supp{\operatorname{Supp}}
\def\codim{\operatorname{codim}}

\def\Exc{\operatorname{Exc}}

\def\sg{\operatorname{sg}}

\def\max{\operatorname{max}}

\def\m{{\mathfrak m}}

\def\p{{\mathfrak p}}

\newcommand{\Q}{\mathbb{Q}}

\newcommand{\Z}{\mathbb{Z}}

\newcommand{\sO}{\mathcal{O}}
\newcommand{\sHom}{\mathcal{H}\! \mathit{om}}

\newsavebox{\pullback}
\sbox\pullback{%
\begin{tikzpicture}%
\draw (0,0) -- (1ex,0ex);%
\draw (1ex,0ex) -- (1ex,1ex);%
\end{tikzpicture}}

\newsavebox{\pullbackdl}
\sbox\pullbackdl{%
\begin{tikzpicture}%
\draw (-1ex,0ex) -- (0ex,0ex);%
\draw (0ex,-1ex) -- (0ex,0ex);%
\end{tikzpicture}}

\newsavebox{\pushoutdr}
\sbox\pushoutdr{%
\begin{tikzpicture}%
\draw (-1ex,-1ex) -- (-1ex,0ex);%
\draw (-1ex,0ex) -- (0ex,0ex);%
\end{tikzpicture}}

\theoremstyle{plain}
\newtheorem{thm}{Theorem}[section] 

\newtheorem{prop}[thm]{Proposition}

\newtheorem{lem}[thm]{Lemma}
\theoremstyle{definition} 
\newtheorem{defn}[thm]{Definition}

\theoremstyle{remark}
\newtheorem{rem}[thm]{Remark}

\newtheorem{defn and notation}[thm]{Definition and Notation}

\newtheorem*{cl}{Claim}

\theoremstyle{plain}
\newtheorem{theo}{Theorem}

\numberwithin{equation}{thm}
\keywords{Vanishing theorem; Singularities; Cartier operators}
\subjclass[2020]{14F17, 14F10, 14B05, 13A35}

\begin{document}
\tolerance = 9999

\begin{abstract}
We establish Grauert--Riemenschneider vanishing for $F$-pure threefolds over a perfect field $k$ of characteristic $p>5$.
We apply this to prove Steenbrink vanishing for three-dimensional sharply $F$-pure pairs in characteristic $p>5$. As a consequence, we obtain the logarithmic extension for one-forms in this setting.
\end{abstract}

\maketitle
\markboth{TATSURO KAWAKAMI}{LOCAL VANISHING FOR F-PURE THREEHOLDS}

\section{Introduction}

In this article, our target is an $F$-pure threefold $X$ over a perfect field of characteristic $p>5$.
We first show Grauert--Riemenschneider vanishing for $X$.

\subsection{Grauert--Riemenschneider vanishing}
Grauert--Riemenschneider vanishing is one of essential tools in birational geometry. It can be seen as a local version of Kodaira vanishing.
Since there exist smooth projective surfaces in any positive characteristic that violates Kodaira vanishing, by taking an affine cone, we can see that Grauert--Riemenschneider vanishing fails in any positive characteristic.
On the other hand, it is famous that globally $F$-split varieties satisfies Kodaira vanishing, and therefore it is natural to expect Grauert--Riemenschneider vanishing for $F$-pure singularities, which is a local version of global $F$-splitting.
Indeed, three-dimensional strongly $F$-regular singularities, which are stronger version of $F$-pure singularities, satisfy Grauert--Riemenschneider vanishing \cite{Baudin-Kawakami-Rysler}.
However, very surprisingly, there exists an $F$-pure terminal singularity that violates Grauert--Riemenschneider vanishing \cite{Tot19,Baudin-Bernasconi-Kawakami}.
More strongly, this singularity is Frobenius liftable (see Proposition \ref{prop:Totaro}).

Nevertheless, we prove if $p>5$, then normal $F$-pure threefolds satisfy Grauert--Riemenschneider vanishing.

\begin{theo}[Grauert--Riemenschneider vanishing]\label{introthm:GR}
     Let $X$ be a normal $F$-pure threefold over a perfect field of characteristic $p>5$.
     Then we have
    \[
    R^j\pi_{*}\omega_Y=0
    \]
    for all $j>0$ and all resolutions $\pi\colon Y\to X$.
\end{theo}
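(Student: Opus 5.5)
We reduce to the case where the non-rational locus of $X$ is as small as possible and then use the $F$-splitting together with the Cartier operator to control $R^j\pi_*\omega_Y$. The vanishing $R^j\pi_*\omega_Y=0$ is independent of the choice of resolution: in dimension three resolutions exist and are connected by blow-ups along smooth centers, and $R\pi_*\omega_Y$ is invariant under such blow-ups. So we may fix a convenient log resolution $\pi\colon Y\to X$ with simple normal crossing exceptional divisor $E$.

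\textbf{Step 1: reduce to isolated non-klt points.} Since $X$ is $F$-pure, it is normal and log canonical in the appropriate sense when $K_X$ is $\Q$-Cartier. In general we pass to a small $\Q$-factorialization or a dlt-type modification. Away from a finite set the singularities along curves are, at a general point, $F$-pure surface singularities times a curve. Two-dimensional $F$-pure singularities are either rational or simple elliptic/cusp-type, and for these $R^1\pi_*\omega=0$ holds by the surface Grauert--Riemenschneider theorem, which holds in every characteristic. Hence $R^j\pi_*\omega_Y$ is supported at finitely many closed points for $j\ge1$. Moreover $R^2\pi_*\omega_Y$ vanishes by the fiber-dimension argument combined with duality. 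The remaining task is $R^1\pi_*\omega_Y=0$ at finitely many points.

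\textbf{Step 2: use $F$-purity through the trace map.} By Grothendieck duality, $R^1\pi_*\omega_Y$ at a point $x$ is dual to the local cohomology $H^2_E(Y,\sO_Y)$, or to $H^2_{\m}(R\pi_*\sO_Y)$ modulo $H^2_\m(\sO_X)$. $F$-purity makes Frobenius $F\colon H^i_\m(\sO_X)\to H^i_\m(\sO_X)$ injective. We compare this with the Frobenius action on $H^1(E,\sO_E)$ and on $H^2_E(\sO_Y)$ via the Leray spectral sequence. The action on $R^1\pi_*\sO_Y$ is controlled by Cartier operators on $\Omega^1_Y(\log E)$. We then use a vanishing of the form $H^0(E,\Omega^1_E(\log)\otimes\ldots)$, or a Bogomolov-type bound on the exceptional surfaces, to show that a nonzero class in $R^1\pi_*\omega_Y$ would produce a nonzero nilpotent element of $H^2_\m(\sO_X)$. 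That contradicts injectivity of Frobenius.

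\textbf{Step 3: where $p>5$ enters.} Totaro's example shows that $F$-purity alone is not enough. The extra input is that the exceptional divisors over an isolated non-rational $F$-pure point are log Calabi--Yau surface pairs, or are birationally ruled over curves. For such surfaces in characteristic $p>5$ we have liftability to $W_2(k)$ and Kawamata--Viehweg vanishing for log del Pezzo pairs. We would invoke these to get $H^1(E_i,\sO_{E_i}(-E_i|_{E_i}))=0$-type statements, and then run the standard induction on the components of $E$ to obtain $H^1_E$-vanishing.

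\textbf{Main obstacle.} The hardest step is Step 2/3: converting the $F$-splitting into vanishing of $R^1\pi_*\omega_Y$ at the bad points. This requires a classification of the log canonical centers of $F$-pure threefold singularities and of the corresponding exceptional surfaces, together with vanishing on those surfaces valid exactly when $p>5$. I expect most of the work to lie in that surface-level analysis.
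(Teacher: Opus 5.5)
Your proposal has a genuine gap. Steps 2 and 3 list candidate tools rather than giving an argument, and the mechanism they suggest cannot work as stated.

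\emph{Steps 2 and 3.} Frobenius injectivity on $H^i_{\m}(\sO_X)$ only constrains $X$ itself. By contrast, $R^1\pi_*\omega_Y$ is dual to $H^2_E(\sO_Y)$, which lives on the exceptional locus. You never give a map along which a nonzero class there would become a nilpotent element of $H^2_{\m}(\sO_X)$. Totaro's example shows that $F$-purity of $X$ alone cannot give this: it is terminal, $F$-pure and even Frobenius liftable, yet $R^1\pi_*\omega_Y\neq 0$ and is not annihilated by the Cartier operator.

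\emph{Where $p>5$ enters.} You put it in the wrong place. You want it for vanishing on exceptional surfaces over the non-klt points, but that part is handled by an $F$-splitting argument that works in every characteristic. The real need for $p>5$ is the klt part; Totaro's counterexample is itself klt.

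\emph{Minor point.} Weak factorization is not available in characteristic $p$. Independence of the resolution instead follows from domination by a common resolution together with $Rg_*\omega_{Y'}=\omega_Y$ for proper birational morphisms of regular threefolds (Chatzistamatiou--R\"ulling).

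\emph{The missing idea.} You need to move the $F$-splitting to a birational model on which the exceptional divisor appears with coefficient one.
\begin{itemize}
\item By Schwede--Smith, locally there is $B\geq 0$ with $K_X+B$ $\Q$-Cartier and $(X,B)$ sharply $F$-split, hence lc.
\item Take a log resolution whose exceptional divisor supports a $\pi$-ample divisor. Run a $(K_Y+\pi^{-1}_*B+E)$-MMP over $X$ to get a dlt modification $\rho\colon Z\to X$.
\item By negativity, $K_Z+\rho^{-1}_*B+E_Z=\rho^*(K_X+B)$. So $(Z,E_Z)$ is globally $F$-split over $X$, and $E_Z$ supports a $\rho$-ample $A$ with $\lfloor A\rfloor=-E_Z$.
\item The dual splitting $F^n_*\omega_Z((1-p^n)E_Z)\to\omega_Z$ factors through the trace map $F^n_*\omega_Z(E_Z+p^nA)\to\omega_Z(E_Z+A)=\omega_Z$. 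Relative Serre vanishing for $n\gg0$ then forces $R^j\rho_*\omega_Z=0$ for $j>0$, in any characteristic.
\item Only now does $p>5$ enter. $Z$ is klt, so Grauert--Riemenschneider vanishing for klt threefolds in characteristic $p>5$ (Bernasconi--Koll\'ar) gives $R\phi_*\omega_Y=\omega_Z$ for $\phi\colon Y\to Z$.
\end{itemize}
Hence $R^j\pi_*\omega_Y=R^j\rho_*\omega_Z=0$ for $j>0$. No analysis or classification of exceptional surfaces is needed.
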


Note that the condition $p\neq 2$ is essential for the vanishing as mentioned above. On the other hand, we do not know the condition that $p>5$ is optimal. 
We refer to Grauert--Riemenschneider vanishing for threefolds in positive characteristic for klt threefolds in characteristic $p>5$ \cite{Hacon-Witaszk(rationality),Bernasconi-Kollar},
strongly $F$-regular threefolds \cite{Baudin-Kawakami-Rysler}, and for lc threefolds in large characteristic \cite{Arvidsson}.

\subsection{Steenbrink vanishing vanishing}
In \cite{Kaw7}, we proved a partial version of Steenbrink vanishing for rational singularities, and in particular, we have seen that strongly $F$-regular threefolds and klt threefolds in characteristic $p>41$ satisfy this vanishing.
Combining Theorem \ref{introthm:GR} with the technique in \cite{Kaw7}, we can prove Steenbrink vanishing for three-dimensional sharply $F$-pure pairs in characteristic $p>5$.

\begin{theo}[Steenbrink vanishing]\label{introthm:Steenbrink}
     Let $(X,B)$ be a three-dimensional sharply $F$-pure pair over a perfect field of characteristic $p>5$ such that $K_X+B$ is $\Q$-Cartier.
     Let $\pi\colon Y\to X$ be a log resolution of $(X,B)$ whose reduced exceptional divisor $E$ supports a $\pi$-ample divisor.
     Set $B_Y\coloneqq  \pi^{-1}_*\lfloor B\rfloor+E$.
     Then we have
    \[
    R^j\pi_{*}\Omega^i_Y(\log B_Y)(-B_Y)=0
    \]
    for all integers $i,j$ satisfying $i+j>3$.
\end{theo}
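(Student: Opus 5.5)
Since $\dim X=3$, the pairs $(i,j)$ with $i+j>3$ and $j>0$ are $(3,1),(3,2),(2,2)$ and $(1,j)$ with $j\ge 3$. For the last case $R^j\pi_*=0$ for $j\ge 3$, because the fibres of $\pi$ have dimension at most $2$. For $i=3$ we have
\[
\Omega^3_Y(\log B_Y)(-B_Y)=\omega_Y,
\]
so $R^j\pi_*\omega_Y=0$ for $j>0$ is exactly Theorem \ref{introthm:GR}. This applies because a sharply $F$-pure pair $(X,B)$ has $X$ normal and $F$-pure; note that Theorem \ref{introthm:GR} holds for all resolutions, so no choice is needed here. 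The only real case is therefore $(i,j)=(2,2)$:
\[
R^2\pi_*\Omega^2_Y(\log B_Y)(-B_Y)=0.
\]

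I would follow the strategy of \cite{Kaw7}. The first step is a reduction to vanishing on the exceptional locus. By the theorem on formal functions it suffices to show that $H^2$ of the completion along the fibre vanishes. Using the residue sequence
\[
0\to \Omega^2_Y(\log B_Y)(-B_Y)\to \Omega^2_Y(\log B_Y)(-B_Y+E)\to \ldots
\]
together with $\pi$-ampleness of $-A$ for some divisor $A$ supported on $E$, I would reduce to showing $R^2\pi_*\Omega^2_Y(\log B_Y)(-B_Y-mA)=0$ for suitable twists. Equivalently, by duality, this reduces to a local cohomology vanishing at the closed points for $\pi_*\Omega^1_Y(\log B_Y)$-type sheaves: Steenbrink-type vanishing in degree $(2,2)$ is dual to the statement $H^1_E(Y,\Omega^1_Y(\log B_Y))=0$.

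The second step uses $F$-purity via Cartier operators. The logarithmic Cartier operator gives exact sequences
\[
0\to B_n\Omega^1_Y(\log B_Y)\to Z_n\Omega^1_Y(\log B_Y)\to \Omega^1_Y(\log B_Y)\to 0.
\]
The sharp $F$-splitting of $(X,B)$ lifts, after twisting by $-B_Y$ and using that the boundary coefficients are at most one, to a splitting of the relevant Frobenius on $Y$ in a neighbourhood of each fibre. As in \cite{Kaw7}, this lets one bound $R^2\pi_*$ of $\Omega^2$ by $R^2\pi_*$ of $B_n\Omega^2$ and $Z_n$-pieces twisted by $p^n$-multiples of an anti-ample exceptional divisor. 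The terms $R^j\pi_*$ of $\Omega^3\otimes(\text{twist})$ that appear in the dévissage $0\to Z\Omega^2\to F_*\Omega^2\to B\Omega^3\to0$ are killed by Theorem \ref{introthm:GR} and its twisted variant, namely Grauert--Riemenschneider vanishing for $\omega_Y(-p^nA)$-type sheaves. That twisted variant should follow from the same Frobenius argument applied to $\omega_Y$ together with Serre vanishing for large $n$.

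The main obstacle is this second step: making the $F$-splitting of the pair compatible with log forms along $B_Y$, and controlling the $R^2$ of the $B_n\Omega^2$ terms. In \cite{Kaw7} this required rationality and a large bound on $p$. Here I would use Theorem \ref{introthm:GR} in its place, which gives exactly the vanishing of $R^{1}$ and $R^2$ of $\omega_Y$, twisted by $\pi$-nef exceptional divisors, that feeds the induction on $n$ through $B_n\Omega^3\cong F^n_*\omega_Y/\omega_Y$-type sequences. What I am least sure of is whether the twisted Grauert--Riemenschneider vanishing really follows from Theorem \ref{introthm:GR}; if it does not, I would try to prove it directly by running the proof of Theorem \ref{introthm:GR} with the twist included.
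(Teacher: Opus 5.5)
Your case reduction is fine (every $j\ge 3$ vanishes by fibre dimension, and $i=3$ is Theorem \ref{introthm:GR}). Running the Cartier-operator argument of \cite{Kaw7} for $(i,j)=(2,2)$ is also the right idea. There is, however, a genuine gap: you never isolate what that argument actually needs, and the inputs you propose are not the right ones.

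The mechanism runs as follows. The sequences \eqref{GGB} make $C_n^{-1}\colon R^2\pi_*\Omega^2_Y(\log B_Y)(-B_Y)\to R^2\pi_*G_n\Omega^2_Y(\log B_Y)(-B_Y)$ injective as soon as $R^1\pi_*B\Omega^3_Y=0$, using $B\Omega^3_Y\cong B\Omega^3_Y(\log B_Y)(-B_Y)$. Then compare with the version twisted by $p^nA$, for an anti-effective $\pi$-ample $A$ with $\Supp(A)=E$ and $\lceil A\rceil=0$. The target dies by ordinary Serre vanishing for $R^2\pi_*F^n_*\Omega^2_Y(\log E)(-\pi^{-1}_*B+p^nA)$ and by fibre dimension for $R^3\pi_*$ of the $B_n$-term.

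So no twisted Grauert--Riemenschneider vanishing is needed. Your formal-functions/duality step only restates the goal; the paper uses that duality in the opposite direction, to deduce Theorem \ref{introthm:LET}. The one real input is $R^1\pi_*B\Omega^3_Y=0$. Note that $B\Omega^3_Y$ is the \emph{kernel} of $C\colon F_*\omega_Y\to\omega_Y$, not $F_*\omega_Y/\omega_Y$. So this vanishing is equivalent to two things:
\begin{enumerate}
\item[(a)] injectivity of $C$ on $R^1\pi_*\omega_Y$, which Theorem \ref{introthm:GR} gives for free;
\item[(b)] surjectivity of $C\colon \pi_*F_*\omega_Y\to\pi_*\omega_Y$, which does not follow from Theorem \ref{introthm:GR} and which your proposal never addresses.
\end{enumerate}
This is exactly the reduction of Proposition \ref{prop:Steenbrink} with $d=3$.

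The only thing in your proposal that could supply (b) is the claim that the sharp $F$-splitting of $(X,B)$ lifts to a Frobenius splitting on $Y$ near each fibre. That fails in general. A splitting with boundary $\Delta\ge B$ induces one on $Y$ only if $\pi^{-1}_*\Delta-\sum_i a(E_i;X,\Delta)E_i\ge 0$, i.e.\ all $\pi$-exceptional discrepancies of $(X,\Delta)$ are $\le 0$. Since $(X,\Delta)$ is lc, $\Delta-B$ cannot contain an lc centre of $(X,B)$. A log resolution may well contain exceptional divisors with exactly that centre and positive discrepancy, for instance after further blow-ups over it. The bound $\le 1$ on the coefficients of $B$ does not help with this.

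The paper gets (b) by factoring $\pi$ as $Y\xrightarrow{\phi}Z\xrightarrow{\rho}X$ through a dlt modification. Since $K_Z+\rho^{-1}_*B+E_Z=\rho^*(K_X+B)$ with effective boundary, the splitting does lift to $Z$. Hence $Z$ is globally $F$-split over affine $X$, and $C\colon F_*\omega_Z\to\omega_Z$ is split surjective. Because $Z$ is klt, $\phi_*\omega_Y=\omega_Z$, so $\pi_*\omega_Y=\rho_*\omega_Z$. Pushing the split surjection forward by $\rho_*$ then gives surjectivity of $C$ on $\pi_*\omega_Y$.
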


Considering Steenbrink-type vanishing for lc varieties in characteristic zero in \cite[Theorem 14.1]{GKKP}, it is natural to expect the vanishing
\[
R^{2}\pi_{*}\Omega^i_Y(\log B_Y)(-B_Y)=0
    \]
    for all integers $i\geq 0$. The case $i \geq 2$ is covered by Theorem \ref{introthm:Steenbrink} and the case $i=0$ holds for varieties of lc type in any dimension in all characteristics (see Theorem \ref{thm:lc}).
    On the other hand, the case $i=1$ is widely open.

\subsection{Logarithmic extension for one-forms}

By combining Theorem \ref{introthm:Steenbrink} with the logarithmic extension for lc surface singularities with imperfect residue fields established in \cite{Kawakami-Sato}, we prove the logarithmic extension for three-dimensional sharply $F$-pure pairs in characteristic $p>5$.

\begin{theo}[Logarithmic extension for one-forms]\label{introthm:LET}
    Let $(X,B)$ be a three-dimensional sharply $F$-pure pair over a perfect field of characteristic $p>5$ such that $K_X+B$ is $\Q$-Cartier.
    Then $(X,B)$ satisfies the logarithmic extension theorem, i.e., the restriction map
    \[
    \pi_{*}\Omega^{1}_Y(\log B_Y)\hookrightarrow\Omega^{[1]}_X(\log \lfloor B\rfloor)
    \]
    is surjective for all log resolutions $\pi\colon Y\to X$ of $(X,B)$, where $B_Y\coloneqq  \pi^{-1}_*\lfloor B\rfloor+\Exc(\pi)$.
\end{theo}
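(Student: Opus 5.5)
The plan is to follow the standard route from Steenbrink-type vanishing to logarithmic extension, as in \cite{GKKP} and \cite{Kaw7}. We reduce to a local statement at a closed point and compare reflexive forms with forms on a resolution via local cohomology. We may assume $X$ is affine and work locally at a point $x$. The question is independent of the choice of log resolution, since any two are dominated by a third and pushforwards of log forms agree on the common dominating resolution. So we may take $\pi$ as in Theorem \ref{introthm:Steenbrink}, with $E$ supporting a $\pi$-ample divisor. By the surface case, namely the logarithmic extension for lc surface singularities over imperfect residue fields in \cite{Kawakami-Sato}, extension holds at codimension-two points of $X$. Over such a point $\eta$, localizing gives a two-dimensional sharply $F$-pure (hence lc-type) pair over the residue field $k(\eta)$, which is imperfect. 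So the cokernel of $\pi_*\Omega^1_Y(\log B_Y)\to\Omega^{[1]}_X(\log\lfloor B\rfloor)$ is supported at finitely many closed points. An induction on the closed points, or just this localization, reduces us to showing that a form defined on $U=X\setminus\{x\}$ extends.

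Let $Z=\pi^{-1}(x)$ and $V=Y\setminus Z\cong$ an open neighborhood over $U$; we may arrange this after shrinking. Then extension at $x$ is equivalent to the surjectivity of $H^0(Y,\Omega^1_Y(\log B_Y))\to H^0(V,\Omega^1_Y(\log B_Y))$. By the local cohomology sequence, it suffices to show that $H^1_Z(Y,\Omega^1_Y(\log B_Y))\to H^1(Y,\Omega^1_Y(\log B_Y))$ is injective, or more simply that $H^1_Z(Y,\Omega^1_Y(\log B_Y))=0$. By formal duality (Grothendieck duality for the proper morphism $\pi$ together with local duality on $X$ at $x$), $H^1_Z(Y,\mathcal F)$ is dual to the completion at $x$ of $R^{2}\pi_*(\mathcal F^\vee\otimes\omega_Y)$. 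With $\mathcal F=\Omega^1_Y(\log B_Y)$ we have $\mathcal F^\vee\otimes\omega_Y\cong\Omega^2_Y(\log B_Y)(-B_Y)$, using the perfect pairing $\Omega^1(\log B)\otimes\Omega^2(\log B)\to\Omega^3(\log B)=\omega_Y(B_Y)$. Theorem \ref{introthm:Steenbrink} with $i=2$, $j=2$ (so $i+j=4>3$) gives $R^2\pi_*\Omega^2_Y(\log B_Y)(-B_Y)=0$. Hence $H^1_Z(Y,\Omega^1_Y(\log B_Y))=0$, and the restriction map is surjective at $x$.

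The main obstacle is the reduction step at codimension-two points. There one must check that the localization of a sharply $F$-pure threefold pair at a codimension-two point is a surface pair to which \cite{Kawakami-Sato} applies. That means verifying that $F$-purity descends to a log canonical surface pair over the imperfect field $k(\eta)$, which in dimension two should follow from the known correspondence between $F$-pure and lc surface singularities. One must also check that log resolutions localize to log resolutions. A secondary point is the duality step. Formal duality in positive characteristic for local cohomology supported on the fiber needs care, but it only requires properness of $\pi$ and the Cohen--Macaulayness of $\sO_{X,x}$ up to depth considerations. In fact we only use $H^1_Z$, which corresponds to the top-but-one direct image, and the vanishing of $R^2$ handles this directly.
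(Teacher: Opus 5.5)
Your proposal is correct and is essentially the paper's proof. The paper also gets extension at codimension-two points from \cite{Kawakami-Sato}, citing \cite[Proposition 2.2 (2)]{BBKW} for the log canonicity of $(X,\lfloor B\rfloor)$ in codimension two, so no surface-level $F$-pure/lc correspondence is needed. At closed points it likewise kills $H^1_Z(Y,\Omega^1_Y(\log B_Y))=H^1_{\m_x}(R\pi_*\Omega^1_Y(\log B_Y))$ by duality with the completion of $R^2\pi_*\Omega^2_Y(\log B_Y)(-B_Y)$, which vanishes by Theorem \ref{introthm:Steenbrink}. The paper phrases that last step as the $S_2$ property of $\pi_*\Omega^1_Y(\log B_Y)$, via the injection $H^1_{\m_x}(\pi_*\Omega^1_Y(\log B_Y))\hookrightarrow H^1_{\m_x}(R\pi_*\Omega^1_Y(\log B_Y))$ from the truncation triangle; this is the same as your excision sequence on $Y$. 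Finally, no Cohen--Macaulayness of $\sO_{X,x}$ is needed, since local duality with a dualizing complex holds in general, and avoiding depth hypotheses is exactly the point of this theorem.
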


Some logarithmic extension in higher dimension have been proven  in \cite{Kaw4,KTTWYY2,Kawakami-Sato}, and they all depends on a criterion using reflexive Cartier operator \cite[Theorem A]{Kaw4}.
However, it is not easy to apply \cite[Theorem A]{Kaw4} without Serre's condition $(S_3)$.
In Theorem \ref{introthm:LET}, by utilizing Grauert--Riemenschneider vanishing in Theorem \ref{introthm:GR} instead, we avoid $(S_3)$-condition.

\subsection{Lipman--Zariski type theorem}

In characteristic zero, it is conjectured that singularities with free tangent sheaves are smooth; this is known as the Lipman–Zariski conjecture \cite{Lipman65}. This conjecture is known to hold for log canonical singularities by \cite{Druel,Graf-Kovacs}.
In positive characteristic, however, the conjecture fails in general. Nevertheless, the freeness of the tangent sheaf is still expected to impose strong restrictions on singularities.
Indeed, in \cite[Theorem C]{Kawakami-Sato}, we show that strongly $F$-regular singularities with free tangent sheaves are Frobenius liftable, a highly restrictive condition (see also \cite[Theorem B]{Kawakami-Witaszek}).
Applying Theorem \ref{introthm:LET}, we obtain a similar result for $F$-pure threefolds:

\begin{theo}[Lipman--Zariski type theorem]\label{introthm:LZ-conjecture}
    Let $X$ be a normal $\Q$-Gorenstein $F$-pure threefold over a perfect field of characteristic $p>5$ with isolated singularities.
    If the tangent sheaf $T_X$ is locally free, then $X$ is Frobenius liftable.
\end{theo}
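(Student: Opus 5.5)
The plan is to reduce Frobenius liftability of $X$ to a statement about one-forms on a resolution, and then to control the obstruction with the logarithmic extension theorem (Theorem \ref{introthm:LET}). Since $X$ has isolated singularities, smooth schemes lift their Frobenius locally, and the obstruction is local, we may assume $X=\Spec R$ is affine with a single singular point $x$. It suffices to lift Frobenius on $\Spec R$ over $W_2(k)$. I would follow the strategy of \cite[Theorem C]{Kawakami-Sato}, replacing strong $F$-regularity by $F$-purity plus Theorem \ref{introthm:LET}.

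\textbf{Step 1: reduce to a splitting of a Cartier-type sequence.} Choose a lift $\widetilde{X}$ of $X$ over $W_2(k)$; it exists because $T_X$ is free. By the criterion used in \cite{Kawakami-Sato} and \cite{Kawakami-Witaszek}, a Frobenius lift of $(X,\widetilde X)$ exists if and only if a certain extension class in $\mathrm{Ext}^1(\Omega^{[1]}_X, B^1_X)$ vanishes. This extension is essentially the reflexive version of $0\to B_X^1\to Z^1_X\to \Omega^1_X\to 0$ coming from the Cartier operator, twisted by the lifting data. Since $T_X=\sHom(\Omega^{[1]}_X,\sO_X)$ is free and $X$ is normal, $\Omega^{[1]}_X$ is free: it is the reflexive dual of a free module. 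Hence the obstruction reduces to $H^1(X,B^1_X)\otimes$ (free), up to the question of whether the reflexive Cartier operator is defined on $\Omega^{[1]}_X$ with the expected target. Since $X$ is affine, $H^1(X,B^1_X)=0$. So the key point is that the Cartier isomorphism extends across the singular point in the form needed.

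\textbf{Step 2: extend the Cartier operator using Theorem \ref{introthm:LET}.} Take a log resolution $\pi\colon Y\to X$ with exceptional divisor $E$. By Theorem \ref{introthm:LET}, with $B=0$ and using that $K_X$ is $\Q$-Cartier and that $F$-pure implies sharply $F$-pure, we have $\Omega^{[1]}_X=\pi_*\Omega^1_Y(\log E)$. On $Y$ the log Cartier operator $C\colon Z^1_Y(\log E)\to \Omega^1_Y(\log E)$ is surjective. Applying $\pi_*$ then shows that closed reflexive one-forms on $X$ map onto $\Omega^{[1]}_X$. The cokernel of this map sits inside $R^1\pi_*B^1_Y(\log E)$, and I would need it to vanish near $x$. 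To get this, I would use the sequences $0\to\sO_Y\to F_*\sO_Y\to F_*B^1_Y\to0$ (log version) together with $R^1\pi_*\sO_Y(-E)$-type vanishing. The latter follows from Theorem \ref{introthm:Steenbrink} with $i=0,j\ge 2$ in combination with Theorem \ref{thm:lc}, and from Theorem \ref{introthm:GR} via duality. $F$-purity gives the splitting of $\sO_X\to F_*\sO_X$, which handles the pushforward of the first map.

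\textbf{Step 3: conclude.} With a surjective reflexive Cartier operator onto the free module $\Omega^{[1]}_X$, one can choose closed forms lifting a basis. These give a splitting of the Cartier sequence on $X$, and the vanishing of $H^1$ on the affine $X$ then produces the Frobenius lift by the criterion in Step 1.

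The main obstacle is Step 2. It requires controlling $R^1\pi_*B^1_Y(\log E)$, or at least the image of the relevant connecting map, using only the vanishing available in dimension three. I expect to need Theorem \ref{introthm:GR} together with the $i=2,3$ cases of Theorem \ref{introthm:Steenbrink} and duality on $Y$ to kill the contributions supported at $x$. I would try this first.
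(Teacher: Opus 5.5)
Your architecture is the paper's. You reduce to surjectivity of the reflexive Cartier operator $C\colon Z\Omega^{[1]}_X\to\Omega^{[1]}_X$, which then splits because $\Omega^{[1]}_X\cong T_X^{\vee}$ is free, so \cite[Theorem 3.3]{Kawakami-Takamatsu} gives Frobenius liftability. You prove the surjectivity by pushing forward $0\to B\Omega^1_Y\to Z\Omega^1_Y(\log E)\to\Omega^1_Y(\log E)\to0$ and using Theorem~\ref{introthm:LET} to identify $\pi_*\Omega^1_Y(\log E)$ with $\Omega^{[1]}_X$.

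A side remark on Step 1: the claim that a $W_2(k)$-lift exists ``because $T_X$ is free'' is unjustified. Obstructions to lifting a singular affine scheme live in $T^2_X$, which freeness of $T_X$ does not control. The claim is also unnecessary: the criterion produces the lift together with the Frobenius lift from a splitting of $C$, and a surjection onto a free module splits without any $H^1$-vanishing.

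The genuine gap is the step you flag yourself, $R^1\pi_*B\Omega^1_Y=0$, which is the entire content of the argument. By $0\to\sO_Y\to F_*\sO_Y\to B\Omega^1_Y\to0$ it amounts to Frobenius being surjective on $R^1\pi_*\sO_Y$ and injective on $R^2\pi_*\sO_Y$. None of your listed inputs gives this:
\begin{itemize}
\item Theorem~\ref{introthm:Steenbrink} with $i=0$ is vacuous, since it requires $i+j>3$, i.e.\ $j\geq 4$.
\item Theorem~\ref{thm:lc} gives $R^2\pi_*\sO_Y(-E)=0$ but nothing new about $R^1$.
\item Even vanishing of both $R^1$ and $R^2\pi_*\sO_Y(-E)$ would only control the twisted quotient $F_*\sO_Y(-E)/\sO_Y(-E)$. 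Passing to $B\Omega^1_Y=F_*\sO_Y/\sO_Y$ needs Frobenius to be bijective on $H^j(E,\sO_E)$, which is the same kind of statement again.
\item The splitting of $\sO_X\to F_*\sO_X$ controls only degree zero. It extends to $Y$ only if the associated pair has non-positive discrepancies along all $\pi$-exceptional divisors, so it gives no control of Frobenius on $R^j\pi_*\sO_Y$ for $j>0$.
\end{itemize}

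The missing idea is to route through the dlt modification $\pi\colon Y\xrightarrow{\phi}Z\xrightarrow{\rho}X$, obtained by a $(K_Y+E)$-MMP over $X$ as in the proof of Theorem~\ref{introthm:GR}.
\begin{itemize}
\item Since $K_Z+E_Z=\rho^*K_X$ with every exceptional coefficient equal to one, $Z$ is globally $F$-split.
\item Hence $R^j\rho_*\sO_Z\to F_*R^j\rho_*\sO_Z$ is split injective. It is bijective because these modules have finite length (isolated singularities), so $R^j\rho_*B\Omega^{[1]}_Z=0$ for $j>0$.
\item $Z$ is klt, hence has rational singularities for $p>5$. So $R\phi_*B\Omega^1_Y=B\Omega^{[1]}_Z$, and therefore $R^1\pi_*B\Omega^1_Y=0$.
\end{itemize}

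Your duality idea can also be completed, but only with inputs you did not list. You need $\sHom_{\sO_Y}(B\Omega^1_Y,\omega_Y)\cong B\Omega^3_Y$. You also need Theorem~\ref{introthm:GR} together with surjectivity of $C\colon\pi_*F_*\omega_Y\to\pi_*\omega_Y$ (again proved via $Z$) to get $R\pi_*B\Omega^3_Y=\pi_*B\Omega^3_Y$. Local duality then gives $R^1\pi_*B\Omega^1_Y\cong H^2_x(F_*\sO_X/\sO_X)$, which vanishes by $F$-purity since $H^2_x(\sO_X)$ has finite length.
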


We will see in Proposition \ref{prop:Totaro} that there exists a terminal threefold $X$ in characteristic two such that $T_X$ is free, but $X$ is not smooth.

\section{Preliminaries}

\subsection{Notation and terminology}
Throughout the paper, we work over a fixed perfect field $k$ of characteristic $p > 0$ unless stated otherwise.
\begin{enumerate}
    \item A \textit{variety} is an integral separated scheme of finite type over $k$. 
    \item Given a proper birational morphism $f\colon Y\to X$ of Noetherian normal integral schemes, we denote $\Exc(f)$ by the reduced $f$-exceptional divisor.
    \item Given a Noetherian normal integral scheme $X$, a projective birational morphism $\pi \colon Y \to X$ is called \emph{a resolution of $X$} if $Y$ is regular.
    \item We say $(X,B)$ is a \emph{pair} if $X$ is a normal variety and $B$ is an effective $\Q$-divisor on $X$.
    \item Given a pair $(X,B)$, we say that $\pi\colon Y\to X$ is a \emph{log resolution} of $(X,B)$ if  $Y$ is smooth and $\mathrm{Supp}(\pi_{*}^{-1})B+\Exc(\pi)$ is snc.
    \item For the definition of the singularities of pairs appearing in the MMP (such as \emph{klt, dlt, lc}) we refer to \cite[Definition 2.8]{Kol13}. 
    \item Given a normal variety $X$ and a reduced divisor $B$ on $X$, we denote $j_{*}\Omega_U^{i}(\log B)$ by $\Omega_X^{[i]}(\log B)$, where $j\colon U\hookrightarrow X$ is the inclusion of the snc locus $U$ of $(X, B)$.
\end{enumerate}

\subsection{$F$-pure singularities}

In this section, we recall definition of an $F$-pure singularity and its variant.

\begin{defn}\label{def:F-pure}
Let $(X,B)$ be a pair of a normal variety $X$ and $B$ an effective $\Q$-divisor.
\begin{enumerate}
    \item We say $(X,B)$ is \textit{globally $F$-split} if the map
\[
\sO_{X}\to F^e_{*}\sO_{X}((p^e-1)B)
\] 
splits as an $\sO_{X}$-module homomorphism for all $e>0$.
\item We say $(X,B)$ is \textit{globally sharply $F$-split} if the map
\[
\sO_{X}\to F^e_{*}\sO_{X}(\lceil(p^e-1)B \rceil)
\] 
splits as an $\sO_{X}$-module homomorphism for some $e>0$.
\item We say that $(X,B)$ is \textit{$F$-pure} if there exists an affine open covering $\{U_i\}_{i\in I}$ such that $(U_i, B|_{U_i})$ is globally $F$-split.
\item We say that $(X,B)$ is \textit{sharply $F$-pure} if there exists an affine open covering $\{U_i\}_{i\in I}$ such that $(U_i, B|_{U_i})$ is globally sharply $F$-split.
\end{enumerate}
\end{defn}
\begin{rem}
We have the following implications: 
\[
\text{globally sharply $F$-split}\Longrightarrow
\text{globally $F$-split}, 
 \]
see \cite[Proposition 3.3]{Sch08g} for details. 
If every coefficient of $B$ is not divisible by $p$, then the other implication also holds.
\end{rem}

\subsection{Cartier operators}

Let $Y$ be a smooth variety, and let $E$ be a reduced divisor on $Y$ with snc support.
Let $A$ be a $\Q$-divisor whose support of the fractional part $A-\lfloor A\rfloor$ is contained in $E$.
Then we have locally free $\sO_X$-submodules
\[
B_{n}\Omega_Y^i(\log E)(p^nA)\subseteq Z_{n}\Omega_Y^i(\log E)(p^nA) \subseteq F^{n}_{*}\Omega_Y^i(\log E)(p^nA)
\]
of $F^{n}_{*}\Omega_Y^i(\log E)(p^nA)$. We refer to \cite[Section 3]{Kaw7} for details.
The iterated Cartier isomorphism (cf.\,\cite[Lemma 3.3]{Hara98}, \cite[equation (5.4.2)]{KTTWYY1}) gives the short exact sequence
\begin{equation}\label{BZOmega(iterated)}
    0 \to B_{n}\Omega_Y^i(\log E)(p^nA) \to Z_{n}\Omega_Y^i(\log E)(p^nA) \xrightarrow{C_{n}}  \Omega_Y^i(\log E)(A) \to 0
    \end{equation}
Taking $i=d$, we have a short exact sequence
\begin{equation}\label{BZOmega(iterated),i=d}
    0 \to B_{n}\Omega_Y^d(\log E)(p^nA) \to F^n_{*}\omega_Y(E+p^nA) \xrightarrow{C_{n}}  \omega_Y(E+A) \to 0,
    \end{equation}
and $C_n$ is nothing but the Frobenius trace map.

\subsubsection{Inverse Cartier operators}
For $n\geq 0$, we set
\begin{equation}\label{def of GOmega}
    G_n\Omega^i_Y(\log E)(p^nA) \coloneqq \frac{F^n_*\Omega^{i}_Y(\log E)(p^nA)}{B_n\Omega^{i}_Y(\log E)(p^nA)}.
\end{equation}
By definition, we have a short exact sequence
\begin{equation}\label{BOmegaG}
    0\to B_n\Omega^{i}_Y(\log E)(p^nA)\to F^n_*\Omega^{i}_Y(\log E)(p^nA)\to G_n\Omega^i_Y(\log E)(p^nA)\to 0.
\end{equation}
We have the inverse iterated Cartier operator
\begin{align}
\label{eq:dual-Cartier-Hara-higher} 
C^{-1}_{n}\colon \Omega^i_Y(\log E)(A)&\underset{\cong}{\xleftarrow{C_{n}}} \frac{Z_n\Omega^{i}_Y(\log E)(p^nA)}{B_n\Omega^{i}_Y(\log E)(p^nA)} \\[0.2em]
&\hookrightarrow \frac{F^n_*\Omega^{i}_Y(\log E)(p^nA)}{B_n\Omega^{i}_Y(\log E)(p^nA)}=G_n\Omega^i_Y(\log E)(p^nA). \nonumber
\end{align}
We have a decomposition 
\begin{equation}\label{eq:decomposition}
    C^{-1}_{n}\colon \Omega^i_Y(\log E)(A)\xrightarrow{C_{1,0}^{-1}} G_1\Omega^i_Y(\log E)(pA)\xrightarrow{C_{2,1}^{-1}}\cdots \xrightarrow{C_{n,n-1}^{-1}} G_n\Omega^i_Y(\log E)(p^nA)
\end{equation}
and a short exact sequence
\begin{equation}\label{GGB}
    0\to G_n\Omega^i_Y(\log E)(p^nA)\xrightarrow{C^{-1}_{n+1,n}}G_{n+1}\Omega^i_Y(\log E)(p^{n+1}A)\to F^n_{*}B\Omega^{i+1}_Y(\log E)(p^{n+1}A)\to 0.
\end{equation}
We refer to \cite[Section 3]{Kaw7} for details.

\subsubsection{Reflexive Cartier operator}
Let $X$ be a normal variety, and let $j \colon U\hookrightarrow X$ be the inclusion of the smooth locus.
We set 
\[
B_n \Omega_X^{[i]} \coloneqq j_{*}B_{n}\Omega_U^{i}\hspace{5mm}\text{and}\hspace{5mm} Z_n\Omega_X^{[i]} \coloneqq j_{*}Z_{n}\Omega_U^{i},
\]
which are reflexive $\sO_X$-modules.
Pushing forward the Cartier operator on $U$
\[
Z\Omega^i_U \xrightarrow{C}  \Omega^{i}_U, 
\]
we obtain the \textit{reflexive Cartier operator}
\[
Z\Omega^{[i]}_X \xrightarrow{C}  \Omega^{[i]}_X.
\]
Note that the reflexive Cartier operator is not necessarily surjective.
Indeed, the surjectivity of the reflexive Cartier operator of degree $i$ implies the logarithmic extension for $i$-forms \cite[Theorem A]{Kaw4}.

\section{Proof of main theorems}

In this section, we prove Theorems \ref{introthm:GR}, \ref{introthm:Steenbrink}, \ref{introthm:LET}, and \ref{introthm:LZ-conjecture}.

\subsection{Grauert--Riemenschneider vanishing}
We first focus on Theorem \ref{introthm:GR}. The key idea is to use the $F$-purity of a dlt modification of a sharply $F$-pure pair.

\begin{lem}\label{lem:key}
    Let $(X,B)$ be a sharply $F$-pure pair such that $K_X+B$ is $\Q$-Cartier, and let $\rho\colon Z\to X$ be a projective birational morphism with reduced exceptional divisor $E_Z$ such that
    \begin{enumerate}
        \item[\textup{(1)}] $K_Z+\rho^{-1}_*B+E_{Z}=\rho^{*}(K_X+B)$ and
        \item[\textup{(2)}] $E_Z$ supports a $\rho$-ample $\Q$-divisor.
    \end{enumerate}
    Then $R^j\rho_{*}\omega_Z=0$ for all $j>0$.
\end{lem}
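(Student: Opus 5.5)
The plan is to pull the $F$-splitting of $(X,B)$ back to $Z$ and then use it to make $R^j\rho_*\omega_Z$ a direct summand of something that Serre vanishing kills. The question is local on $X$, so I assume $X=\Spec R$ is affine and $(X,B)$ is globally sharply $F$-split. I write $B_Z\coloneqq \rho^{-1}_*B+E_Z$.

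\textbf{Step 1: the splitting lifts to $Z$.} Since $(X,B)$ is sharply $F$-pure, the Cartier index of $K_X+B$ is prime to $p$ (Schwede). So I can choose $e$ such that $(p^e-1)(K_X+B)$ is Cartier and a splitting of $\sO_X\to F^e_*\sO_X(\lceil (p^e-1)B\rceil)$ exists. Such a splitting is a section
\[
\phi\in H^0\bigl(X,\sO_X((1-p^e)(K_X+B))\bigr)
\]
which maps $1$ to $1$. By (1), $\rho^*\bigl((1-p^e)(K_X+B)\bigr)=(1-p^e)(K_Z+B_Z)$. Hence $\rho^*\phi$ is a map $F^e_*\sO_Z((p^e-1)B_Z)\to\sO_Z$. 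It agrees with $\phi$ at the generic point, so it still sends $1\mapsto 1$. Therefore $(Z,B_Z)$ is globally sharply $F$-split. The point of (1) is that the exceptional components appear with coefficient exactly $1$, so the pullback acquires no poles along $E_Z$.

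\textbf{Step 2: perturb the boundary using (2).} By (2) and the negativity lemma, there is a $\rho$-ample divisor $H=-\sum a_iE_i$ with all $a_i>0$. For $0<\varepsilon\ll 1$, I set $\Delta\coloneqq B_Z-\varepsilon\sum a_iE_i\le B_Z$. Decreasing the boundary preserves global sharp $F$-splitting, and $-(K_Z+\Delta)\sim_{\Q,\rho}\varepsilon H$ is $\rho$-ample. Choosing $e$ divisible enough, I get for every Weil divisor $D$ a split surjection
\[
F^e_*\sO_Z\bigl(p^eD+(1-p^e)(K_Z+\Delta)\bigr)\to \sO_Z(D)
\]
of reflexive sheaves, with $(1-p^e)(K_Z+\Delta)$ integral and $\rho$-ample up to $\rho$-linear equivalence. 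Consequently, for all $j>0$, $R^j\rho_*\sO_Z(D)$ is a direct summand of $F^e_*R^j\rho_*\sO_Z\bigl(D+(p^e-1)(D-K_Z-\Delta)\bigr)$.

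\textbf{Step 3: apply this to $D=K_Z$.} Here $D-K_Z-\Delta=-\Delta$, which is not ample, so the naive Serre vanishing argument fails. This is the main obstacle. I plan to get around it by using the freedom to choose the twist. I replace $D=K_Z$ by $K_Z+E_Z$ and run an induction along
\[
0\to\omega_Z\to\omega_Z(E_Z)\to\omega_{E_Z}\to 0 .
\]
\begin{itemize}
\item For $\omega_Z(E_Z)$: in the splitting of Step 2 I use the boundary $\Delta'\coloneqq\Delta-\rho^{-1}_*B$. It is still sharply $F$-split, since it is smaller. Then $K_Z+E_Z-(K_Z+\Delta')\equiv_\rho \varepsilon H$ is ample. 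Iterating $e\to\infty$, Serre vanishing gives $R^j\rho_*\omega_Z(E_Z)=0$ for $j>0$.
\item For $\omega_{E_Z}$: $E_Z$ is $F$-pure by $F$-adjunction from the plt-type splitting along the coefficient-one part. Moreover, $\rho|_{E_Z}$ maps to a scheme of dimension $\le 1$, with the ample $-H|_{E_Z}$. So the same trace argument, applied on $E_Z$ together with relative Serre duality on the fibres, should give the vanishing of $R^{j}\rho_*\omega_{E_Z}$ for $j\ge1$ and the surjectivity of $\rho_*\omega_Z(E_Z)\to\rho_*\omega_{E_Z}$.
\end{itemize}
The long exact sequence then yields $R^j\rho_*\omega_Z=0$ for all $j>0$. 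I expect the delicate part to be this last step, namely controlling the restriction to $E_Z$ without any Cohen--Macaulay assumption on $Z$. If it fails, I would instead dualize: I would show that the splitting injects $H^i_{\rho^{-1}(x)}(\sO_Z)$ into $H^i_{\rho^{-1}(x)}(F^e_*\sO_Z(-(p^e-1)\varepsilon H))$ and conclude with Grothendieck duality.
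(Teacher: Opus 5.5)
There is a genuine gap, and it starts with a sign error in Step 2. With $H=-\sum a_iE_i$, your boundary is $\Delta=B_Z+\varepsilon H$. Hence $K_Z+\Delta=\rho^*(K_X+B)+\varepsilon H\sim_{\Q,\rho}\varepsilon H$. So it is $K_Z+\Delta$, not $-(K_Z+\Delta)$, that is $\rho$-ample, and $(1-p^e)(K_Z+\Delta)$ is $\rho$-anti-ample.

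The same slip breaks the first bullet of Step 3. There $\Delta'=E_Z+\varepsilon H$, so $(K_Z+E_Z)-(K_Z+\Delta')=\varepsilon\sum a_iE_i=-\varepsilon H$ is anti-ample, and Serre vanishing gives nothing.

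This cannot be repaired inside your d\'evissage, because both bullets assert false statements. Take $X$ the cone over an ordinary elliptic curve (times $\mathbb{A}^1$ if you want a threefold), $B=0$, and $\rho$ the blow-up of the vertex. Then $X$ is $F$-pure and Gorenstein, $K_Z+E_Z=\rho^*K_X$, and $-E_Z$ is $\rho$-ample, so all hypotheses hold. But $\omega_Z(E_Z)\cong\rho^*\omega_X$ is trivial near $E_Z$, so $R^1\rho_*\omega_Z(E_Z)\cong R^1\rho_*\sO_Z\neq0$. Likewise $R^1\rho_*\omega_{E_Z}\cong H^1(E_Z,\sO_{E_Z})\neq0$. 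What actually holds in this example is that $R^1\rho_*\omega_Z(E_Z)\to R^1\rho_*\omega_{E_Z}$ is an isomorphism, and your approach never addresses that map.

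A smaller point concerns Step 1. The index of $K_X+B$ for a sharply $F$-pure pair need not be prime to $p$: take $X$ smooth and $B=\frac1pD$. The conclusion of Step 1 is still correct. Attach to the splitting a divisor $\Delta_\phi\geq B$ with $(p^e-1)(K_X+\Delta_\phi)\sim0$ and pull it back; this is \cite[Lemma 2.7]{GT}, as in the paper.

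The missing idea is to choose a much smaller boundary. Once Step 1 shows that $(Z,E_Z)$ is globally $F$-split, so is $(Z,\Delta)$ for $\Delta\coloneqq-\delta H=\delta\sum a_iE_i$ with $0<\delta a_i\le1$. Feed this $\Delta$ and $D=K_Z$ into your Step 2. Then $D-K_Z-\Delta=\delta H$ is $\rho$-ample, so $R^j\rho_*\omega_Z$ is a direct summand of $F^e_*R^j\rho_*\sO_Z(K_Z+\lfloor(p^e-1)\delta H\rfloor)$. That sheaf vanishes for $e\gg0$ by relative Serre vanishing.

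Your fallback already uses exactly this splitting, $\sO_Z\to F^e_*\sO_Z(-(p^e-1)\varepsilon H)$ suitably rounded. Apply $\sHom_{\sO_Z}(-,\omega_Z)$ to it directly rather than passing through local cohomology. On a non-Cohen--Macaulay $Z$, local cohomology is dual to $R\rho_*\omega_Z^\bullet$, not to $R\rho_*\omega_Z$.

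This is the paper's proof. It dualizes the splitting of $\sO_Z\to F^e_*\sO_Z((p^e-1)E_Z)$ to a split surjection $F^e_*\omega_Z((1-p^e)E_Z)\to\omega_Z$. It then factors this through the trace $F^e_*\omega_Z(E_Z+p^eA)\to\omega_Z(E_Z+A)=\omega_Z$ for a $\rho$-ample $A$ with $\lfloor A\rfloor=-E_Z$. No restriction to $E_Z$ is needed.
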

\begin{proof}
    Since the assertion is local on $X$, we may assume that $(X,B)$ is globally sharply $F$-split.
    By assumption (1), it follows that $(Z,\rho^{-1}_*B+E_{Z})$ is globally sharply $F$-split \cite[Lemma 2.7]{GT}.
    In particular, $(Z,E_{Z})$ is globally $F$-split.
    By the definition of global $F$-splitting, the map
        \[
        \sO_Z\to F^e_{*}\sO_Z((p^e-1)E_Z)
        \]
        splits for all $e\geq 1$.
        Taking $\sHom_{\sO_Z}(-,\omega_Z)$, we have a split surjection
        \[
        F^e_{*}\omega_Z((1-p^e)E_{Z})\to \omega_Z
        \]
        for all $e\geq 1$.
         By assumption (2), we can take a $\rho$-ample $\Q$-divisor $A$ with $\lfloor A \rfloor=-E_{Z}$.
        We take $n\gg0$ such that 
        \[
        R^jf_{*}\omega_Z(E_{Z}+p^nA)=0
        \]
        for all $j>0$, whose existence is ensured by relative Serre vanishing.
        Now, we have the following commutative diagram:
        \[
\begin{tikzcd}
F^n_{*}\omega_Z((1-p^n)E_{Z}) \arrow[d,hookrightarrow] \arrow[r,"C_n"] & \omega_Z \arrow[d,equal] \\
F^n_{*}\omega_Z(E_{Z}+p^nA) \arrow[r,"C_n"] & \omega_Z(E_Z+A),
\end{tikzcd}  
\]
see \eqref{BZOmega(iterated),i=d}.
        Since the upper horizontal map is a split surjection, we have a surjection
        \[
        0=R^j\rho_{*}\omega_Z(E_{Z}+p^nA)\to R^j\rho_{*}\omega_Z
        \]
        for all $j>0$,
        which concludes the assertion.
\end{proof}

\begin{proof}[Proof of Theorem \ref{introthm:GR}]
   We first note that the vanishing can be checked one resolution since we have resolution of singularities in dimension three and smooth threefolds satisfy Grauert--Riemenschneider vanishing \cite{CR11,CR15}.
   By \cite[Theorem 4.3]{SS10}, we can find an effective $\Q$-divisor $B$ such that $(X,B)$ is sharply $F$-split.
   By using \cite[Theorem 1]{Kollar-Witaszek}, we can take a log resolution $\pi\colon Y\to X$ of $(X,B)$ with reduced exceptional divisor $E$ supports a $\pi$-ample divisor $H$.
    We fix such a resolution, and aim to show that 
    \[
    R^j\pi_{*}\omega_Y=0
    \]
    for all $j>0$.
    
    Now, we take a dlt modification as a factorization of $\pi$.
    Since $E$ supports a $\pi$-ample divisor, we can run a $(K_Y+\pi^{-1}_*B+E)$-MMP over $X$ scaling a $\pi$-ample divisor by \cite[Theorem 9]{Kollar(non-Q-Factorial)}.
    Thus, we obtain a factorization 
    \[
    \pi\colon Y\xrightarrow{\phi} Z\xrightarrow{\rho} X,
    \]
    where $\rho\colon Z\to X$ is a minimal model, which is called a \textit{dlt modification}.
    Let $E_Z$ be the reduced $\rho$-exceptional divisor.
    Then $E_Z$ supports a $\rho$-ample divisor \cite[Theorem 9 (3)]{Kollar(non-Q-Factorial)}.
    Since $(X,B)$ is sharply $F$-split and $K_X+B$ is $\Q$-Cartier, it is log canonical \cite[Theorem 4.4]{SS10}.
    Since $K_Z+\rho^{-1}_{*}B+E_{Z}$ is nef, the negativity lemma shows that $K_Z+\rho^{-1}_{*}B+E_{Z}=\rho^{*}(K_X+B)$.
    
    Now, we can use Lemma \ref{lem:key} to conclude $R^j\rho_{*}\omega_Z=0$ for all $j>0$.
    Furthermore, since $Z$ is klt and $p>5$, we have $R\phi_{*}\omega_Y=\omega_Z$ \cite[Theorem 1.3]{Bernasconi-Kollar}. We remark that we use the assumption that $p>5$ only here. 
    Now, we obtain $R^j\pi_{*}\omega_Y=R^j\phi_{*}R\rho_{*}\omega_Y=R^j\phi_{*}\omega_Z=0$ for all $j>0$, as desired.
\end{proof}

\subsection{Steenbrink vanishing}

In this section, we derive Steenbrink vanishing from the Grauert–Riemenschneider vanishing established in the previous section. The argument makes essential use of techniques involving the Cartier operator.
The key strategy of the proof (Proposition \ref{prop:Steenbrink}) is essentially the same as in \cite{Kaw7}.
However, since we cannot assume that the singularities are rational, several additional arguments are required.

\begin{prop}[\textup{cf.~\cite[Theorem 4.1 and Proposition 4.2]{Kaw7}}]\label{prop:Steenbrink}
    Let $X$ be a normal variety of dimension $d$, and let $B$ be a reduced divisor on $X$.
    Let $\pi\colon Y\to X$ be a log resolution of $(X,B)$ whose reduced exceptional divisor $E$ supports a $\pi$-ample divisor. Set $B_Y\coloneqq \pi^{-1}_{*}B+E$.
    Suppose that 
    \begin{itemize}
        \item $C\colon F_{*}R^{d-3}\pi_{*}\omega_Y\to R^{d-3}\pi_{*}\omega_Y$ is surjective and
        \item $C\colon F_{*}R^{d-2}\pi_{*}\omega_Y\to R^{d-2}\pi_{*}\omega_Y$ is injective.
    \end{itemize}
     Then 
    \[
    R^{d-1}\pi_{*}\Omega^{d-1}_Y(\log B_Y)(-B_Y)=0.
    \]
\end{prop}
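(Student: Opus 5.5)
The strategy follows \cite[Theorem 4.1]{Kaw7}. We apply the Cartier-operator sequences with $i=d-1$, $E$ replaced by $B_Y$, and $A$ a $\Q$-divisor with $\lfloor A\rfloor=-B_Y$ that is $\pi$-ample. Such an $A$ exists because $E$ supports a $\pi$-ample divisor and $\pi^{-1}_*B$ can be absorbed; if necessary we only require $A$ to be $\pi$-ample with fractional part supported on $B_Y$. Then $\Omega^{d-1}_Y(\log B_Y)(A)=\Omega^{d-1}_Y(\log B_Y)(-B_Y)$. For $n\gg0$, relative Serre vanishing gives $R^{j}\pi_*F^n_*\Omega^{i}_Y(\log B_Y)(p^nA)=0$ for all $j>0$ and all $i$. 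The goal is to show that the inverse Cartier map
\[
R^{d-1}\pi_*\Omega^{d-1}_Y(\log B_Y)(A)\xrightarrow{C^{-1}_n} R^{d-1}\pi_*G_n\Omega^{d-1}_Y(\log B_Y)(p^nA)
\]
is injective, while its target injects into something that vanishes.

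\textbf{Step 1 (the target vanishes up to a boundary term).} By \eqref{BOmegaG} and Serre vanishing,
\[
R^{d-1}\pi_*G_n\Omega^{d-1}\cong R^{d}\pi_*B_n\Omega^{d-1},
\]
and $R^d\pi_*=0$ because the fibres have dimension at most $d-1$. So $R^{d-1}\pi_*G_n\Omega^{d-1}=0$, and it suffices to show that $C^{-1}_n$ is injective on $R^{d-1}\pi_*$. Using the decomposition \eqref{eq:decomposition} together with \eqref{GGB}, injectivity at each step $C^{-1}_{m+1,m}$ follows if the connecting map
\[
R^{d-2}\pi_*F^m_*B\Omega^{d}_Y(\log B_Y)(p^{m+1}A)\to R^{d-1}\pi_*G_m\Omega^{d-1}
\]
is surjective, or more simply if $R^{d-2}\pi_*F^m_*B\Omega^d(\ldots)\to R^{d-1}\pi_*G_m$ is zero onto the relevant part.

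\textbf{Step 2 (identify $B\Omega^d$ via \eqref{BZOmega(iterated),i=d}).} Since $Z\Omega^d=F_*\omega_Y(B_Y+pA')$, we have the exact sequence
\[
0\to B_1\Omega^d\to F_*\omega_Y(B_Y+p\cdot)\xrightarrow{C}\omega_Y(B_Y+\cdot)\to 0.
\]
Taking $R\pi_*$, with the twisted pieces $\omega_Y(B_Y+A)=\omega_Y$ and $\omega_Y(B_Y+p^{m}A)$, expresses $R^{d-2}\pi_*B\Omega^d$ and $R^{d-1}\pi_*B\Omega^d$ through the kernel and cokernel of the trace maps $C\colon F_*R^{j}\pi_*\omega_Y\to R^{j}\pi_*\omega_Y$ for $j=d-3,d-2$. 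Concretely, the surjectivity of $C$ on $R^{d-3}$ and the injectivity of $C$ on $R^{d-2}$ give $R^{d-2}\pi_*B\Omega^d(pA)=0$, at least at the first level. Inducting on $m$ then propagates this along $F^m_*$, using that $A$ becomes more positive.

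\textbf{Step 3 (conclude).} With these vanishings, each map $R^{d-1}\pi_*G_m\to R^{d-1}\pi_*G_{m+1}$ is injective, so $R^{d-1}\pi_*\Omega^{d-1}_Y(\log B_Y)(-B_Y)$ injects into $R^{d-1}\pi_*G_n=0$.

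The main obstacle is Step 2: the higher levels $m\geq1$ carry twists $p^{m+1}A$ rather than untwisted $\omega_Y$. I would handle these by comparing with the untwisted trace through the diagram used in Lemma \ref{lem:key}, or by choosing $A$ with $\lceil p^mA\rceil$ controlled, so that only $R^{d-3}$ and $R^{d-2}$ of $\omega_Y$ enter. Rationality is unavailable here, so these two hypotheses must replace it exactly there.
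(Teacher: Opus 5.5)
Your Step 1 is correct and matches the paper. For a $\pi$-ample $A$ with $\lfloor A\rfloor=-B_Y$ and $n\gg0$, the sequence \eqref{BOmegaG}, Serre vanishing and $R^d\pi_*=0$ give $R^{d-1}\pi_*G_n\Omega^{d-1}_Y(\log B_Y)(p^nA)=0$. There is one slip: injectivity of $R^{d-1}\pi_*G_m\to R^{d-1}\pi_*G_{m+1}$ requires the connecting map out of $R^{d-2}\pi_*F^m_*B\Omega^d$ to be \emph{zero}, not surjective.

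The genuine gap is Step 2. You flag it but do not close it, and the induction you sketch cannot work. With a single $\pi$-ample $A$, the cokernel in \eqref{GGB} at level $m$ is $F^m_*B\Omega^d_Y(\log B_Y)(p^{m+1}A)$, governed by
\[
0\to B\Omega^d_Y(\log B_Y)(p^{m+1}A)\to F_*\omega_Y(B_Y+\lfloor p^{m+1}A\rfloor)\to \omega_Y(B_Y+\lfloor p^mA\rfloor)\to 0 .
\]
Your hypotheses concern only the untwisted trace $F_*\omega_Y\to\omega_Y$, i.e.\ only the case $\lfloor p^{m+1}A\rfloor=\lfloor p^mA\rfloor=-B_Y$. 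Even your ``first level'' claim silently needs the coefficients of $A$ to lie in $[-1/p,0)$. As soon as $\lfloor p^{m+1}A\rfloor\neq -B_Y$, twisted canonical sheaves appear whose trace maps nothing in your hypotheses controls. ``$A$ becomes more positive'' gives no mechanism to carry the vanishing through the intermediate levels, where $p^mA$ is neither $\ge -B_Y$ nor yet positive enough for Serre vanishing.

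Nor can you choose $A$ with $\lfloor p^mA\rfloor=-B_Y$ for all $m\le n$. That forces $\lfloor p^nA\rfloor=-B_Y$, so $F^n_*\Omega^{d-1}_Y(\log B_Y)(p^nA)$ is the untwisted sheaf and Step 1 loses its Serre vanishing. Injectivity along a twisted chain and vanishing at its end are incompatible demands on one $A$.

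The missing idea is to decouple the two. Your hint about ``comparing with the untwisted trace'' points the right way, but the comparison must happen at the level of $G_n\Omega^{d-1}$, not of $\omega_Y$ as in Lemma \ref{lem:key}.

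\begin{itemize}
\item Prove injectivity of $C^{-1}_n$ on $R^{d-1}\pi_*$ for the \emph{untwisted} chain $G_m\Omega^{d-1}_Y(\log B_Y)(-B_Y)$, with twist $-B_Y$ at every level. There every cokernel in \eqref{GGB} is $F^m_*B\Omega^d_Y(\log B_Y)(-B_Y)\cong F^m_*B\Omega^d_Y$ by \cite[Lemma 3.3 (3)]{Kaw7}. Your two hypotheses, applied once to $0\to B\Omega^d_Y\to F_*\omega_Y\to\omega_Y\to0$, give $R^{d-2}\pi_*B\Omega^d_Y=0$, hence injectivity at every level with no induction.
\item Keep your Step 1 for the twisted $G_n$, taking $n$ large enough that also $A\le -p^{-n}B_Y$.
\item Then $\lfloor p^nA\rfloor\le -B_Y$ induces a natural map $G_n\Omega^{d-1}_Y(\log B_Y)(p^nA)\to G_n\Omega^{d-1}_Y(\log B_Y)(-B_Y)$. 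It commutes with the two inverse Cartier operators, whose sources are both $R^{d-1}\pi_*\Omega^{d-1}_Y(\log B_Y)(-B_Y)$ because $\lfloor A\rfloor=-B_Y$.
\end{itemize}

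Any class therefore maps to $0$ in the twisted group (which vanishes), hence to $0$ in the untwisted group, hence is $0$ by injectivity. This comparison square is exactly how the paper concludes.
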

\begin{proof}
    By the short exact sequence \eqref{BZOmega(iterated),i=d}
    \[
    0\to B\Omega^d_Y \to F_{*}\omega_Y\to \omega_Y\to 0
    \]
    and assumptions, we have 
    \[
    R^{d-2}\pi_{*}B\Omega^d_Y=0.
    \]
    Since $B\Omega^d_Y\cong B\Omega^d_Y(\log B_Y)(-B_Y)$ by \cite[Lemma 3.3 (3)]{Kaw7}, we have 
    \[
    R^{d-2}\pi_{*}B\Omega^d_Y(\log B_Y)(-B_Y)=0.
    \]
    By the short exact sequence \eqref{GGB}
    \begin{multline*}
        0\to G_{n}\Omega^{d-1}_Y(\log B_Y)(-B_Y) \xrightarrow{C_{n+1,n}^{-1}} G_{n+1}\Omega^{d-1}_Y(\log B_Y)(-B_Y)\\ \to F^nB\Omega^{d}_Y(\log B_Y)(-B_Y)\to 0,
    \end{multline*}
    it follows that
    \[
    C_{n+1,n}^{-1}\colon R^{d-1}\pi_{*}G_{n}\Omega^{d-1}_Y(\log B_Y)(-B_Y)\to R^{d-1}\pi_{*}G_{n+1}\Omega^{d-1}_Y(\log B_Y)(-B_Y)
    \]
    is injective for all $n\geq 0$, and thus
    \[
    C_{n}^{-1}\colon R^{d-1}\pi_{*}\Omega^{d-1}_Y(\log B_Y)(-B_Y)\to R^{d-1}\pi_{*}G_{n}\Omega^{d-1}_Y(\log B_Y)(-B_Y)
    \]
    is injective for all $n\geq 0$ by \eqref{eq:decomposition}.
    By assumption, we can take a $\pi$-ample anti-effective $\Q$-divisor $A$ such that $\lceil A \rceil=0$ and $\Supp(A)=E$.
   Take $n\gg0$ so that $A\leq -\frac{1}{p^n}E$ and $R^{d-1}\pi_{*}\Omega^{i}_Y(\log E)(-\pi^{-1}_{*}B+p^nA)=0$, whose existence is ensured by Serre vanishing.
   Now, we have the following commutative diagram:
    \[
\begin{tikzcd}
R^{d-1}\pi_{*}\Omega_Y^i(\log B_Y)(-\pi^{-1}_{*}B+A) \arrow[d,equal] \arrow[r,"C^{-1}_n"] & R^{d-1}\pi_{*}G_{n}\Omega_Y^i(\log E)(-\pi^{-1}_{*}B+p^nA) \arrow[d] \\
R^{d-1}\pi_{*}\Omega_Y^i(\log B_Y)(-B_Y) \arrow[r,"C^{-1}_n"] & R^{d-1}\pi_{*}G_{n}\Omega_Y^i(\log B_Y)(-B_Y).
\end{tikzcd}  
\]
Since the lower horizontal map is injective, the assertion is reduced to the vanishing
\[
R^{d-1}\pi_{*}G_{n}\Omega_Y^i(\log E)(-\pi^{-1}_{*}B+p^nA)=0.
\]
Consider the exact sequence \ref{BOmegaG}
\begin{multline*}
    0\to B_{n}\Omega_Y^i(\log E)(-\pi^{-1}_{*}B+p^nA)\to F^n_{*}\Omega_Y^i(\log E)(-\pi^{-1}_{*}B+p^nA)\\ \to G_{n}\Omega_Y^i(\log E)(-\pi^{-1}_{*}B+p^nA)\to 0.
\end{multline*}
By the assumption of $n$, we have
\[
R^{d-1}\pi_{*}F^n_{*}\Omega_Y^i(\log E)(-\pi^{-1}_{*}B+p^nA)=0.
\]
Since every fiber of $\pi$ has dimension at most $d-1$, we have
 \[
R^{d}\pi_{*}B_n\Omega_Y^i(\log E)(-\pi^{-1}_{*}B+p^nA)=0.
\]
Thus, we obtain the desired vanishing.
\end{proof}

\begin{proof}[Proof of Theorem \ref{introthm:Steenbrink}]
        We use the notation of Proof of Theorem \ref{introthm:GR}.
        By Theorem \ref{introthm:GR}, we have $R^j\pi_{*}\omega_Y=0$ for all $j>0$.
        Thus, it suffices to show that
        \[
        R^2\pi_{*}\Omega^2_Y(\log B_Y)(-B_Y)=0.
        \]
        Furthermore, by Proposition \ref{prop:Steenbrink}, this vanishing is reduced to the surjectivity of
        \[
        C\colon \pi_{*}F_{*}\omega_Y\to \pi_{*}\omega_Y.
        \]
        Consider a factorization 
    \[
    \pi\colon Y\xrightarrow{\phi} Z\xrightarrow{\rho} X,
    \]
    where $\rho\colon Z\to X$ is a dlt modification of $(X,B)$.
    Then as in the proof of Theorem \ref{introthm:GR},
    $Z$ is klt and globally $F$-split.
    Since $\phi_{*}\omega_Y=\omega_Z$, our aim is to show
    \begin{equation}\label{eq:long}
            C\colon \rho_*F_* \omega_Z\to \rho_*\omega_Z
        \end{equation}
        is surjective.
        Since $Z$ is globally $F$-split, the map
        \[
        C\colon F_* \omega_Z\to\omega_Z
        \]
        is a split surjection, and thus \eqref{eq:long} is surjective.
        Thus, we conclude.
\end{proof}

\subsection{Extension one-forms and Lipman-Zariski type theorem}

In this section, we derive logarithmic extension for one-forms on $F$-pure threefolds from the Steenbrink vanishing in the previous section and the logarithmic extension for two-dimensional lc singularities with imperfect residue fields \cite[Theorem 5.9]{Kawakami-Sato}.

\begin{proof}[Proof of Theorem \ref{introthm:LET}]
Since the assertion is independent from a choice of log resolutions (cf.~\cite[Lemma 2.13]{GKK}), we may assume that $E$ supports a $\pi$-ample divisor \cite[Theorem 1.1]{Kollar-Witaszek}.
We may assume that $X=\Spec\,R$ is affine.
It suffices to show that $\pi_{*}\Omega^{1}_Y(\log E)$ satisfies Serre's condition $(S_2)$, i.e., 
\[
H^1_{\p\sO_{X,\p}}\left((\pi_{*}\Omega^{1}_Y(\log E))_{\p}\right)=0
\]
for every prime ideal $\p\subseteq R$ whose hight bigger than one.
By \cite[Theorem 5.9]{Kawakami-Sato}, the restriction map
\[
\pi_{*}\Omega^{[1]}_Y(\log B_Y)\hookrightarrow \Omega^{[1]}_X(\log B)
\]
is an isomorphism in codimension two. Here, we note that $(X,\lfloor B\rfloor )$ is log canonical in codimension two \cite[Proposition 2.2 (2)]{BBKW}.
Therefore, it suffices to show that
\[
H^1_{\m_x}(\pi_{*}\Omega^{[1]}_Y(\log B_Y))=0
\]
for every closed point $x\in X$.
Consider the exact triangle
\[
\pi_{*}\Omega^{1}_Y(\log B_Y)\to R\pi_{*}\Omega^{1}_Y(\log B_Y)\to R^{>0}\pi_{*}\Omega^{1}_Y(\log B_Y) \xrightarrow{+1},
\]
where $R^{>0}\pi_{*}\Omega^{1}_Y(\log B_Y)$ denotes the canonical truncation $\tau^{>0}R\pi_{*}\Omega^{1}_Y(\log B_Y)$.
Since $R^{>0}\pi_{*}\Omega^{1}_Y(\log B_Y)\in D^{>0}$, we have
$H^0_{\m_x}(R^{>0}\pi_{*}\Omega^{1}_Y(\log B_Y))=0$.
Thus it suffices to show that
\[
H^1_{\m_x}(R\pi_{*}\Omega^{1}_Y(\log B_Y))=0.
\]
By local duality \cite[\href{https://stacks.math.columbia.edu/tag/0AAK}{Tag 0AAK}]{stacks-project}, we have isomorphisms
\begin{align*}
    (H^1_{\m_x}(R\pi_{*}\Omega^{1}_Y(\log B_Y)))^{\vee}&\cong \mathcal{H}^{-1}R\sHom_{\sO_Y}(R\pi_{*}\Omega^{1}_Y(\log B_Y),\omega^{\bullet}_Y)^{\wedge}_x\\
    &\cong \mathcal{H}^{-1}\left(R\pi_{*}\sHom_{\sO_Y}(\Omega^{1}_Y(\log B_Y),\omega_Y)[3]\right)^{\wedge}_x\\
    &\cong R^{2}\pi_{*}\Omega^{2}_Y(\log B_Y)(-B_Y)_x^{\wedge}.
\end{align*}
By Theorem \ref{introthm:Steenbrink}, we conclude.
\end{proof}

\subsection{Lipman-Zariski type theorem}
In this section, we prove a Lipman–Zariski type theorem as a consequence of the logarithmic extension theorem (Theorem \ref{introthm:LET}).
To this end, we establish the surjectivity of the reflexive Cartier operator using the logarithmic extension theorem (Proposition \ref{prop:surj of ref Cartier operator}).
It is known that the surjectivity of the reflexive Cartier operator in degree $i$ implies the logarithmic extension theorem for $i$-forms \cite[Theorem A]{Kaw4}.
The result below may be viewed as a converse to this implication.

\begin{prop}\label{prop:surj of ref Cartier operator}
    Let $X$ be a normal $F$-pure $\Q$-Gorenstein threefold over a perfect field of characteristic $p>5$ with isolated singularities.
    Then the reflexive Cartier operator
    \[
    C\colon Z\Omega^{[1]}_X\to \Omega^{[1]}_X
    \]
    is surjective.
\end{prop}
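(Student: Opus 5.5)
We reduce the surjectivity of the reflexive Cartier operator to the corresponding statement on a log resolution, where the Cartier operator is surjective. The logarithmic extension theorem (Theorem \ref{introthm:LET}) is what transports surjectivity back down to $X$.

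The question is local, and both sides are reflexive and agree with the smooth-locus Cartier operator away from the finitely many singular points. So we may assume $X=\Spec R$ is affine with a single singular point $x$. By \cite[Theorem 4.3]{SS10} we pick an effective $\Q$-divisor $B$ with $(X,B)$ sharply $F$-pure; after shrinking we may take $\lfloor B\rfloor=0$ near $x$, or simply ignore $B$ in the log structure. We then take a log resolution $\pi\colon Y\to X$ that is an isomorphism over $X\setminus\{x\}$, with reduced exceptional divisor $E$.

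On $Y$ we have the log Cartier exact sequence \eqref{BZOmega(iterated)} with $n=1$, $A=0$:
\[
0\to B\Omega^1_Y(\log E)\to Z\Omega^1_Y(\log E)\xrightarrow{C}\Omega^1_Y(\log E)\to 0 .
\]
Applying $\pi_*$ gives a map $\pi_*Z\Omega^1_Y(\log E)\to\pi_*\Omega^1_Y(\log E)$ whose cokernel injects into $R^1\pi_*B\Omega^1_Y(\log E)$. By Theorem \ref{introthm:LET} with $B=0$, $\pi_*\Omega^1_Y(\log E)=\Omega^{[1]}_X$. Moreover, $\pi_*Z\Omega^1_Y(\log E)\subseteq Z\Omega^{[1]}_X$, since closed forms restrict to closed forms on the smooth locus. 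These maps are compatible with the reflexive Cartier operator, so it suffices to show that $\pi_*C$ is surjective, i.e.\ that $\pi_*Z\Omega^1_Y(\log E)\to\pi_*\Omega^1_Y(\log E)$ is onto. Equivalently, we need
\[
\pi_*F_*\sO_Y\xrightarrow{\ d\ }\pi_*B\Omega^1_Y(\log E)\ \text{surjective and}\ R^1\pi_*B\Omega^1_Y(\log E)\to R^1\pi_*Z\Omega^1_Y(\log E)\ \text{injective},
\]
or more simply the vanishing of the connecting map to $R^1\pi_*B\Omega^1_Y(\log E)$.

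To control $R^1\pi_*B\Omega^1_Y(\log E)$ we use $0\to\sO_Y\to F_*\sO_Y\to B\Omega^1_Y(\log E)\to 0$ (for $B\Omega^1$, logarithmic poles add nothing). This gives the exact sequence
\[
R^1\pi_*\sO_Y\xrightarrow{F} R^1\pi_*F_*\sO_Y\to R^1\pi_*B\Omega^1_Y\to R^2\pi_*\sO_Y\xrightarrow{F}R^2\pi_*F_*\sO_Y .
\]
An alternative route avoids this: use the factorization $Y\to Z\to X$ through the dlt modification from the proof of Theorem \ref{introthm:GR}, where $Z$ is globally $F$-split. Global $F$-splitting of $(Z,E_Z)$ makes Frobenius injective on all $H^j(Z,\sO_Z(-E_Z))$-type groups. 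Combined with $R\phi_*\sO_Y=\sO_Z$ (klt threefolds with $p>5$ have rational singularities by \cite{Bernasconi-Kollar}), this should make the connecting homomorphism vanish. The point is that the Frobenius pullback on $R^j\pi_*\sO_Y(-E)$ is split injective, and this splitting is Grothendieck dual to the surjectivity of $C$ on $R\pi_*\omega_Y$ used in the proof of Theorem \ref{introthm:Steenbrink}.

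The main obstacle is exactly this compatibility: checking that the image of $\pi_*C$ is the full $\Omega^{[1]}_X$, i.e.\ that every closed reflexive form mapping to a given $\omega$ can be chosen to extend to $Y$. I would first try the crude argument. Given a local section $\omega\in\Omega^{[1]}_X=\pi_*\Omega^1_Y(\log E)$, lift $C^{-1}(\omega)$ locally on $Y$. The obstruction to a global lift lies in $H^1(Y,B\Omega^1_Y(\log E))$, and the splitting of Frobenius on $Z$ can be used to kill it by replacing the lift with its image under the splitting map. If that fails, I would pass to the $(S_2)$ argument used in the proof of Theorem \ref{introthm:LET}: show that $\mathrm{coker}(C)$ is supported at $x$ and has depth forcing it to vanish, using $H^1_{\m_x}$ of $Z\Omega^{[1]}_X$ and local duality against the Steenbrink vanishing.
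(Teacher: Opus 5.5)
\textbf{There is a genuine gap.} Your reduction is the paper's. Theorem~\ref{introthm:LET} gives $\pi_*\Omega^1_Y(\log E)=\Omega^{[1]}_X$, and $\pi_*Z\Omega^1_Y(\log E)\subseteq Z\Omega^{[1]}_X$, so everything comes down to the connecting map into $R^1\pi_*B\Omega^1_Y$. The paper kills that group using exactly the tools you list: the globally $F$-split dlt modification $Z$, $R\phi_*\sO_Y=\sO_Z$, and $0\to\sO\to F_*\sO\to B\Omega^1\to 0$.

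The decisive step, however, is only asserted (``this should make the connecting homomorphism vanish''), and split injectivity of Frobenius does not give it. The splitting of $Z$ yields split exact sequences $0\to R^j\rho_*\sO_Z\xrightarrow{F}F_*R^j\rho_*\sO_Z\to R^j\rho_*B\Omega^{[1]}_Z\to 0$. Injectivity alone only identifies $R^j\rho_*B\Omega^{[1]}_Z$ with $\mathrm{coker}(F)$, and that cokernel is what must vanish. So you need $F$ to be \emph{bijective} on $R^j\rho_*\sO_Z$ for $j>0$.

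The missing idea is where the isolated-singularity hypothesis enters; you only use it to shrink. Over the regular locus both $\pi$ and $\phi$ are rational, so $R^j\rho_*\sO_Z$ is supported at finitely many closed points and has finite length. An injective $\sO_X$-linear map $M\to F_*M$ between finite-length modules with perfect residue fields is then an isomorphism, since $\ell(F_*M)=\ell(M)$. Hence $R^j\rho_*B\Omega^{[1]}_Z=0$ for $j>0$, and together with $R\phi_*\sO_Y=\sO_Z$ this gives $R^1\pi_*B\Omega^1_Y=0$. If the support were a curve, $F_*$ would multiply the length at its generic point by $p$ and $F$ could never be surjective, so this step genuinely depends on isolatedness. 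Also, the sheaves that matter are $\sO_Y$ and $\sO_Z$, not their twists by $-E$ and $-E_Z$, since $B\Omega^1_Y=F_*\sO_Y/\sO_Y$.

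Your fallbacks do not close the gap as written either. ``Replacing the lift by its image under the splitting map'' has no evident action on $H^1(Y,B\Omega^1_Y)$ that kills a class, unless you already know the relevant group vanishes, which is the same missing point. In the $(S_2)$ fallback, the obstruction is $H^2_{\m_x}(B\Omega^{[1]}_X)$, not $H^1_{\m_x}(Z\Omega^{[1]}_X)$, which is zero anyway by reflexivity. Steenbrink vanishing plays no role there.

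Done correctly, though, that fallback gives a shorter proof than the paper's, using the same finite-length trick directly on $X$. Localize at a singular point $x$ and let $j\colon U\hookrightarrow X$ be the smooth locus. Left exactness of $j_*$ gives $\mathrm{coker}(C)\hookrightarrow R^1j_*B\Omega^1_U\cong H^2_{\m_x}(F_*\sO_X/\sO_X)$. By $F$-splitting this equals $\mathrm{coker}\bigl(F\colon H^2_{\m_x}(\sO_X)\to F_*H^2_{\m_x}(\sO_X)\bigr)$. That cokernel vanishes because $H^2_{\m_x}(\sO_X)$ has finite length ($\dim X=3$ and $X$ is Cohen--Macaulay off $x$) and $F$ is injective on it. This route needs neither Theorem~\ref{introthm:LET} nor the dlt modification, nor $p>5$ or the $\Q$-Gorenstein assumption.
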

\begin{rem}
    If $X$ is Cohen--Macaulay, then the statement holds without any assumption on the characteristic $p$ (see \cite[Lemma 3.10]{Kaw4}).
    In this theorem we impose conditions on the dimension and the characteristic in the place of Serre's conditions.

    The main reason for assuming that $\dim X=3$ and $p>5$ is that we use the fact that three-dimensional klt singularities in characteristic $p>5$ are rational. 
\end{rem}
\begin{proof}
    Since the assertion is local on $X$, we may assume that $X$ is globally $F$-split.
    Let $\pi\colon Y\to X$ be a log resolution.
    As in the proof of Theorem \ref{introthm:GR}, we run a $(K_Y+E)$-MMP over $X$ \cite[Theorem 9]{Kollar(non-Q-Factorial)}
    to obtain the factorialization
    \[
    \pi\colon Y\xrightarrow{\phi} Z\xrightarrow{\rho} X,
    \]
    where $\rho\colon Z\to X$ is a dlt modification. 
    As in the proof of Theorem \ref{introthm:GR}, $Z$ is klt and globally $F$-split.
    By the definition of global $F$-splitting, the short exact sequence
    \[
    0\to \sO_Z\to F_{*}\sO_Z\to B\Omega^{[1]}_Z\to 0
    \]
    splits.
    Here, $F_{*}\sO_Z/\sO_Z$ is $S_2$ since it is a direct summand of $F_{*}\sO_Z$, which shows 
    an isomorphism $F_{*}\sO_Z/\sO_Z\cong B\Omega^{[1]}_Z$.
    Taking $R\rho_{*}$, we have a split exact sequence
    \[
    0\to R^j\rho_{*}\sO_Z\to F_{*}R^j\rho_{*}\sO_Z\to R^j\rho_{*}B\Omega^{[1]}_Z\to 0.
    \]
    Since regular excellent scheme has rational singularities \cite[Theorem 1.1]{CR15},
    the higher direct image $R^j\rho_{*}\sO_Z$ is supported at the singular locus, which has dimension zero.
    Thus, $R^j\rho_{*}\sO_Z$ is a finite-dimensional $k$-vector space, and $R^j\rho_{*}\sO_Z\to F_{*}R^j\rho_{*}\sO_Z$ is an isomorphism for all $j>0$. 
    Therefore, we obtain \begin{equation}\label{equation 1}
        R\rho_{*}B\Omega^{[1]}_Z=B\Omega^{[1]}_X.
    \end{equation}

    Next, consider the short exact sequence
    \[
    0\to \sO_Y\to F_{*}\sO_Y\to B\Omega^{1}_Y\to 0.
    \]
    Since $Z$ is klt and $p>5$, we have $R\phi_{*}\sO_Y=\sO_Z$ by \cite[Corollary 1.3]{ABL}.
    Taking $R\phi_{*}$, we have
    \begin{equation}\label{equation 2}
    R\phi_{*}B\Omega^1_Y=B\Omega^{[1]}_Z.
    \end{equation}
    Combining \eqref{equation 1} and \eqref{equation 2}, we have
    \begin{equation}\label{equation 3}
    R\pi_{*}B\Omega^1_Y=B\Omega^{[1]}_X.
    \end{equation}
    
    Consider the short exact sequence \eqref{BZOmega(iterated)}
    \[
    0\to B\Omega^1_Y\to Z\Omega^1_Y(\log E)\xrightarrow{C} \Omega^1_Y(\log E)\to 0,
    \]
    where we used $B\Omega^1_Y=B\Omega^1_Y(\log E)$ \cite[Lemma 3.3 (1)]{Kaw7}.
    Taking $\pi_{*}$, we have a surjection
    \[
    \pi_{*}Z\Omega^{1}_Y(\log E)\xrightarrow{C}\pi_{*}\Omega^{1}_Y(\log E)\to R^1\pi_{*}B\Omega^1_Y\overset{\eqref{equation 3}}{=}0.
    \]
    By Theorem \ref{introthm:LET}, we have 
    \[
    \pi_{*}\Omega^{1}_Y(\log E)\cong \Omega^{[1]}_X.
    \]
    By the exact sequence
    \[
    0\to \pi_{*}Z\Omega^{1}_Y(\log E) \to F_{*}\pi_{*}\Omega^{1}_Y(\log E)\cong F_{*}\Omega^{[1]}_X \to \pi_{*}B\Omega^{2}_Y(\log E),
    \]
    it follows that $\pi_{*}Z\Omega^{1}_Y(\log E)\cong Z\Omega^{[1]}_X$ since $\pi_{*}B\Omega^{2}_Y(\log E)$ is torsion-free.
    Thus we obtain the surjectivity of 
    \[
    C\colon Z\Omega^{[1]}_X\to \Omega^{[1]}_X,
    \]
    as desired.
\end{proof}

\begin{proof}[Proof of Theorem \ref{introthm:LZ-conjecture}]
    Since $\Omega^{[1]}_X\cong \sHom_{\sO_X}(T_X,\sO_X)$ is locally free, the surjection
    \[
    Z\Omega^{[1]}_X\xrightarrow{C} \Omega^{[1]}_X
    \]
    splits locally. This shows that $X$ is Frobenius liftable by \cite[Theorem 3.3]{Kawakami-Takamatsu}.
\end{proof}

\subsection{Totaro's example}

In the next proposition, we show that Totaro's example \cite{Tot19} of a non-Cohen--Macaulay terminal threefold is Frobenius liftable, but does not satisfy Grauert--Riemenschneider vanishing.
The failure of Grauert--Riemenschneider vanishing was proven in \cite[Theorem 5.2]{Baudin-Bernasconi-Kawakami}. In fact, stronger result is proved there: $R^1\pi_{*}\omega_Y$ is not annihilated by the action of the Cartier operator.
Following the approach of \cite[Remark 5.3]{Baudin-Bernasconi-Kawakami}, we show that this example is Frobenius liftable. Here, we say that a variety $X$ is \textit{Frobenius liftable} if it admits a flat lifting to the ring $W_2(k)$ of Witt vectors of length two together with a lifting of its Frobenius morphism (see \cite[Definition 2.6]{Kaw4}).
Since Frobenius liftable varieties satisfy the logarithmic extension of any degree \cite[Theorem B]{Kaw4},
we may still expect that Theorem \ref{introthm:LET} holds in any characteristic.

\begin{prop}[\textup{cf.\,\cite{Tot19,Baudin-Bernasconi-Kawakami}}]\label{prop:Totaro}
    Let $\mathbb{G}^3_{m}=\Spec\,\mathbb{F}_2[x,\frac{1}{x},y,\frac{1}{y},z,\frac{1}{z}]$ and consider a $\Z/2\Z$-action $\sigma\colon \mathbb{G}^3_{m}\to \mathbb{G}^3_{m}$ defined by 
    \[
    x\mapsto \frac{1}{x},y\mapsto\frac{1}{y},z\mapsto\frac{1}{z}.
    \]
    Let $\rho \colon \mathbb{G}^3_{m}\to X\coloneqq \mathbb{G}^3_{m}/\sigma$ be the quotient by the $\Z/2\Z$-action, and let $\pi\colon Y\to X$ be a log resolution with reduced exceptional divisor $E$.
    Then the following hold:
    \begin{enumerate}
        \item[\textup{(1)}] $X$ is terminal.
        \item[\textup{(2)}] $R^1\pi_{*}\omega_Y\neq 0$.
        \item[\textup{(3)}] $\Omega^{[i]}_X$ is free $\sO_X$-module for all $i\geq 0$.
        \item[\textup{(4)}] $C\colon Z\Omega^{[i]}_X\to \Omega^{[i]}_X$ is surjective for all $i\geq 0$.
        \item[\textup{(5)}] $\pi_{*}\Omega^i_Y(\log E)=\Omega^{[i]}_X$ for all $i\geq0$.
        \item[\textup{(6)}] $X$ is Frobenius liftable.
    \end{enumerate}
\end{prop}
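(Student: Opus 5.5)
The natural route is to work entirely with the explicit invariants of the torus $\mathbb{G}^3_m=\Spec S$, $S=\mathbb{F}_2[x^{\pm1},y^{\pm1},z^{\pm1}]$. The key observation is that the logarithmic forms $\omega_1=dx/x$, $\omega_2=dy/y$, $\omega_3=dz/z$ satisfy $\sigma^*\omega_i=-\omega_i=\omega_i$ in characteristic two. Hence every product $\omega_{i_1}\wedge\cdots\wedge\omega_{i_r}$ is $\sigma$-invariant.

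\textbf{Structure of the quotient.} The fixed locus of $\sigma$ consists of the points $x=y=z=1$, so it is a single point. The action is free in codimension $\geq 2$, hence $X=\Spec S^\sigma$ is normal with one isolated singular point. On the free locus $\rho$ is étale, so $\Omega^{[i]}_X=(\rho_*\Omega^i_{\mathbb{G}^3_m})^{\sigma}$. Since $\Omega^i_{\mathbb{G}^3_m}$ is free on the invariant wedge monomials of the $\omega_j$, we get
\[
(\rho_*\Omega^i)^\sigma=\bigoplus S^\sigma\,\omega_{I}.
\]
This proves (3).

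\textbf{Surjectivity of $C$, Frobenius lift, and extension.} For (4), the Cartier operator satisfies $C(f^p\omega_I)=f\,\omega_I$, since the $\omega_j$ are $d\log$ forms. Moreover $C$ commutes with the $\sigma$-action. On the smooth locus, $Z\Omega^{[i]}_X$ contains $F_*(S^\sigma)^{p}\omega_I$, so $C$ hits every $S^\sigma\omega_I$ and is surjective.

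For (6), I would lift $\sigma$ to $W_2(\mathbb{F}_2)[x^{\pm},y^{\pm},z^{\pm}]$ by the same formula. This lifted action commutes with the Frobenius lift $x\mapsto x^2$, $y\mapsto y^2$, $z\mapsto z^2$. Taking invariants gives a lift $\tilde X$ with a Frobenius lift, but flatness of the invariant ring over $W_2$ and compatibility with reduction mod $2$ must be checked, since the group order equals $p$. Alternatively, (6) should follow from (3) and (4): the surjection $Z\Omega^{[1]}_X\to\Omega^{[1]}_X$ onto a free module splits, so \cite[Theorem 3.3]{Kawakami-Takamatsu} applies exactly as in the proof of Theorem \ref{introthm:LZ-conjecture}. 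I would use this second route.

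For (5), Frobenius liftable varieties satisfy logarithmic extension in all degrees \cite[Theorem B]{Kaw4}. This gives the surjectivity of $\pi_*\Omega^i_Y(\log E)\hookrightarrow\Omega^{[i]}_X$. A more direct check also works: the global generators $\omega_I$ are honest log forms on any resolution, because $x$, $y$, $z$ become rational functions whose $d\log$'s have at worst log poles.

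\textbf{Terminality and failure of vanishing.} Statement (1) is Totaro's computation \cite{Tot19}, and (2) is \cite[Theorem 5.2]{Baudin-Bernasconi-Kawakami}. I would cite both directly, recalling only that $\omega_1\wedge\omega_2\wedge\omega_3$ is an invariant nowhere-vanishing generator. Hence $K_X$ is Cartier, and discrepancy computations on an explicit blow-up give terminality.

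\textbf{Main obstacle.} The delicate step is (6) in wild characteristic. I expect that invoking the splitting criterion of \cite{Kawakami-Takamatsu} via (3) and (4) avoids any explicit check of flatness of the invariant ring over $W_2$.
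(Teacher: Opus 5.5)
Your proposal is correct and follows the paper's proof for (1)--(4) and (6): citations for (1)--(2), invariance of the $d\log$ forms in characteristic two for (3)--(4), and the splitting of $C$ onto the free module $\Omega^{[1]}_X$ plus \cite[Theorem 3.3]{Kawakami-Takamatsu} for (6). For (5) you go through (6) and \cite[Theorem B]{Kaw4}, whereas the paper deduces it from (4) via \cite[Theorem A]{Kaw4}; both routes are valid.

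Your ``more direct check'' for (5) is misjustified as written, because $x,y,z$ are functions on the double cover, not rational functions on $X$ or $Y$. It can be repaired: $dx/x=du/u$ for the invariant $u=x+x^{-1}=(x+1)^2/x$, and the divisor of $u$ on $X$ is twice the image of $\{x=1\}$, so it produces no non-log poles in characteristic two.
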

\begin{rem}
    Suppose that $X$ is a terminal Cohen--Macaulay threefold such that $\Omega^{[1]}_X\cong \sO_X^{\oplus 3}$. Then $X$ is smooth as follows. Since $\omega_X\cong \sO_X$, the variety $X$ is Gorenstein and terminal. In particular, $X$ has only hypersurface singularities by \cite[Theorem 6.4]{TanakaGorensteinterminal}.
    Since $\codim_{X}(X_{\sg})\geq 3$, it follows that \[\Omega^1_X\cong \Omega^{[1]}_X\cong \sO_X^{\oplus 3}\] by \cite[Lemma 3.11]{Sato-Takagi}. This implies that $X$ is smooth.
\end{rem}
\begin{proof}
   (1) and (2) follows from \cite[Theorem 5.1]{Tot19} and \cite[Theorem 5.2]{Baudin-Bernasconi-Kawakami}, respectively.
   Since $\rho$ is finite, 
   $\rho^{*}\Omega^{[1]}_X$ satisfies $S_2$, and thus it coincides with $\Omega^1_{\mathbb{G}^3_{m}}$.
   Then we have
   \[
   (\sigma_{*}\Omega^1_{\mathbb{G}^3_{m}})^{\Z/2\Z}=(\sigma_{*}\sigma^{*}\Omega^{[1]}_X)^{\Z/2\Z}=\Omega_X^{[1]}.
   \]
   We have $\Omega^{1}_{\mathbb{G}^3_m}=\sO_{\mathbb{G}^3_m}\frac{dx}{x}\oplus \sO_{\mathbb{G}^3_m}\frac{dy}{y}\oplus \sO_{\mathbb{G}^3_m}\frac{dz}{z}$, and each of $\{\frac{dx}{x},\frac{dy}{y},\frac{dz}{z}\}$ is invariant under the $\Z/2\Z$-action. Thus, taking $(\sigma_{*}(-))^{\Z/2\Z}$,
   we obtain
   \[
   \Omega^{[1]}_{X}=(\sigma_{*}\Omega^1_{\mathbb{G}^3_{m}})^{\Z/2\Z}=\sO_{X}\frac{dx}{x}\oplus \sO_{X}\frac{dy}{y}\oplus \sO_{X}\frac{dz}{z},
   \]
   and (3) holds.
   Since each of $\frac{dx}{x},\frac{dy}{y},\frac{dz}{z}\in Z\Omega^{1}_{\mathbb{G}^3_m}$ descents to $Z\Omega^{[1]}_X$,
   the reflexive Cartier operator $Z\Omega^{[1]}_X\to \Omega^{[1]}_X$ is surjective. This shows (4). 
   Moreover, (5) follows from (4) by \cite[Theorem A]{Kaw4}.
   Finally, since $\Omega^{[1]}_X$ is locally free and \[
   C\colon Z\Omega^{[1]}_X\to \Omega^{[1]}_X
   \]
   is surjective, this map splits.
   Thus, we conclude that $X$ is Frobenius liftable by \cite[Theorem 3.3]{Kawakami-Takamatsu}, concluding (6).
\end{proof}

\subsection{An additional vanishing result}
In this section, we prove the vanishing $R^{d-1}\pi_{*}\sO_Y(-E)=0$ for a $d$-dimensional normal variety $X$ of lc type and a resolution $\pi\colon Y\to X$ with reduced exceptional divisor $E$.
The argument closely follows that of \cite[Theorem 3.2]{Baudin-Kawakami-Rysler}, where the vanishing 
$R^{d-1}\pi_{*}\sO_Y=0$ is proven under the assumption that $X$ is of klt type. For the reader’s convenience, we include the proof here.

\begin{thm}\label{thm:lc}
    Let $X$ be a $d$-dimensional Noetherian excellent normal irreducible scheme that admits a dualizing complex.
    Suppose that $X$ is of lc type, i.e., there exists an effective $\Q$-divisor $B$ such that $(X,B)$ is lc.
    Let $\pi\colon Y\to X$ be a resolution with reduced exceptional divisor $E$.
    Then 
    \[
    \dim\Supp R^j\pi_{*}\sO_Y(-E)\leq d-j-2.
    \]
    In particular, 
    \[
    R^{d-1}\pi_{*}\sO_Y(-E)=0.
    \]
\end{thm}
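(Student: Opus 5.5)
Following \cite[Theorem 3.2]{Baudin-Kawakami-Rysler}, the plan is to pass to local cohomology at a point by Grothendieck duality and then invoke a vanishing theorem for excellent regular schemes. The dimension estimate is local on $X$ and stable under localization. So I fix a point $x\in X$ with $\dim \overline{\{x\}}=e$ and localize there: replace $X$ by $\Spec \sO_{X,x}$, which has dimension $d-e$. The claim $\dim\Supp R^j\pi_*\sO_Y(-E)\le d-j-2$ then says that $x\notin \Supp R^j\pi_*\sO_Y(-E)$ whenever $j\ge d-e-1$, i.e.\ whenever $j\ge \dim \sO_{X,x}-1$. Renaming, I may assume $X$ is local of dimension $n$ with closed point $x$, and I must show $R^j\pi_*\sO_Y(-E)$ vanishes near $x$ for $j\ge n-1$. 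By induction on $n$ (the statement for the punctured spectrum follows from the smaller-dimensional localizations), I may assume these sheaves are supported at $x$ at worst. Since the fibers of $\pi$ have dimension at most $n-1$, only $j=n-1$ matters.

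For that vanishing, I apply Grothendieck duality for $\pi$ with $\omega_Y^\bullet=\omega_Y[n]$:
\[
R\sHom(R\pi_*\sO_Y(-E),\omega_X^\bullet)\cong R\pi_*\omega_Y(E)[n].
\]
Combined with local duality, this makes $R^{n-1}\pi_*\sO_Y(-E)$ (supported at $x$) Matlis dual to $H^{1}_{x}(R\pi_*\omega_Y(E))$, up to the contributions of the lower $R^j$, which are controlled by the induction hypothesis. Equivalently, I need $H^1_{\pi^{-1}(x)}(Y,\omega_Y(E))=0$ in the relevant range, or more directly, by the Leray/local cohomology spectral sequence, the vanishing of $H^{n-1}(\pi^{-1}(x)\text{-formal nbhd},\sO_Y(-E))$. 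The cleanest formulation is through the inclusion $\pi_*\omega_Y(E)\to \omega_X$: it suffices to show $H^{0}$ and $H^1$ of the local cohomology of $R\pi_*\omega_Y(E)$ at $x$ vanish (for $n\geq 2$).

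Here the lc hypothesis enters. Since $(X,B)$ is lc, every exceptional prime divisor has discrepancy $\ge -1$ with respect to $K_X+B$, so $K_Y+E-\pi^*(K_X+B)$ is effective up to a non-exceptional part. This gives $\pi_*\omega_Y(E)$ agreeing with $\omega_X$ (both reflexive of rank one and equal off the exceptional locus, with no extra poles allowed). The degree-$1$ part is then handled by the vanishing of \cite{CR15}-type results for $R^{1}$ on the formal neighbourhood: $H^1$ of $\omega_Y(E)$ with supports on $\pi^{-1}(x)$ is dual to the $n-1$-th cohomology of $\sO_Y(-E)$ along the fibre. I would compare $\sO_Y(-E)$ with $\sO_Y(-\lceil \pi^*(\cdot)\rceil)$ using a $\pi$-exceptional Cartier twist and apply the vanishing $R^{n-1}\pi_*\sO_Y(-D)=0$ for effective exceptional $D$ with connected support, from the Lipman/Kollár theory of top-degree cohomology of exceptional fibres (a consequence of the theorem on formal functions and that $H^{n-1}$ of a curve/divisor with ample-ish conormal vanishes).

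The main obstacle is this last step. I need to show that the top cohomology $H^{n-1}$ along the fibre of $\sO_Y(-E)$ vanishes, rather than that of $\sO_Y$ as in the klt case. I expect the lc condition with coefficient exactly $1$ on $E$ to be precisely what allows replacing the klt rounding $\lceil\cdot\rceil$ by $-E$. Concretely, I would write $-E\equiv_\pi K_Y+\{\text{fractional}\}$ modulo $\pi^*$ and use that the boundary has coefficients $\le 1$. I would first try to mimic \cite[Theorem 3.2]{Baudin-Kawakami-Rysler} verbatim with $\lfloor\cdot\rfloor$ shifted by $E$.
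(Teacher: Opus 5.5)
There is a genuine gap. Your reduction to the top-degree vanishing $R^{d-1}\pi_*\sO_Y(-E)=0$ over a local base of dimension $d$ (your $n$) is fine, and the paper makes the same reduction. After that, however, the proposal contains no argument.

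The duality step is a tautology. Grothendieck duality and local duality identify $H^1_x(R\pi_*\omega_Y(E))$ with the Matlis dual of exactly $R^{d-1}\pi_*\sO_Y(-E)^{\wedge}_x$, with no correction terms, so nothing is reduced. The equality $\pi_*\omega_Y(E)=\omega_X$ coming from the discrepancy bound only concerns $\mathcal{H}^0(R\pi_*\omega_Y(E))$. It says nothing about the degree-one local cohomology, which is where the difficulty lies. Also, \cite{CR15} is about birational morphisms between regular schemes, so it gives no information over a singular point.

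More seriously, the vanishing you want to quote, $R^{d-1}\pi_*\sO_Y(-D)=0$ for every effective exceptional $D$ with connected support, is false without a hypothesis on the singularity. Take the cone over a smooth plane quartic $C$, blown up at the vertex. Then $\sO_E(-E)\cong\sO_C(1)\cong\omega_C$. Since $R^2\pi_*\sO_Y(-2E)=0$, there is a surjection $R^1\pi_*\sO_Y(-E)\to H^1(C,\omega_C)\neq 0$. So the lc hypothesis must be built into the vanishing mechanism itself. A Kawamata--Viehweg-type rewriting $-E\equiv_\pi K_Y+\cdots$, as you suggest at the end, is not available for excellent schemes in positive or mixed characteristic. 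Note also that the argument of \cite[Theorem 3.2]{Baudin-Kawakami-Rysler} which the paper adapts is not a duality argument.

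The missing idea is that top-degree cohomology can be controlled by Serre duality on exceptional prime divisors, through a descending induction. The paper proceeds as follows.
\begin{enumerate}
\item Write $\pi$ as the blow-up of some $Z$, so $H=\pi^{-1}(Z)=\sum r_iE_i+G$ is $\pi$-anti-ample.
\item Relative Serre vanishing gives $R^{d-1}\pi_*\sO_Y(-nH-E)=0$.
\item Discard the non-exceptional part $G$: the fibres of $G\to\pi(G)$ have dimension at most $d-2$.
\item Remove exceptional components one at a time using
\[
0\to\sO_Y\Bigl(-\sum n_iE_i-E\Bigr)\to\sO_Y\Bigl(E_j-\sum n_iE_i-E\Bigr)\to\sO_{E_j}\Bigl(E_j-\sum n_iE_i-E\Bigr)\to 0 .
\]
\end{enumerate}
If $\pi(E_j)$ is a point, $H^{d-1}$ of the last sheaf is dual to $H^0(E_j,\sO_{E_j}(K_Y+\sum n_iE_i+E))$. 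This vanishes when $-(K_Y+\sum n_iE_i+E)|_{E_j}$ is big. If $\dim\pi(E_j)>0$, the required vanishing is automatic by fibre dimension.

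Here is where lc enters. Write $K_Y+\sum a_iE_i+E+\pi^{-1}_*B=\pi^*(K_X+B)$; then lc says precisely $a_i\le 0$. Choosing $E_j$ to maximize $(n_i-a_i)/r_i$ over $\{i : n_i-a_i>0\}$ exhibits the restriction as an ample class plus an effective one, hence big.

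If you implement this, check that $j$ can be taken with $n_j\ge 1$. When $a_j<0$, a component with $n_j=0$ lies in that index set and can attain the maximum for a fixed $H$. So keeping all $n_i\ge 0$ until they all reach $0$ needs an extra argument. The bigness step only needs some $\pi$-anti-ample divisor $\sum h_iE_i+G$, and this may be re-chosen at each step.
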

\begin{proof}
    By induction on the dimension and by localizing, it suffices to show that \[
    R^{d - 1}\pi_*\sO_Y(-E) = 0.\]
    By \cite[Theorem II.7.17]{Har}, there exists a closed subscheme $Z\subseteq X$ such that $\pi$ is the blow-up of $X$ along $Z$. Then $H\coloneqq\pi^{-1}(Z)$ is $\pi$-anti-ample (see \cite[\href{https://stacks.math.columbia.edu/tag/02NS}{Tag 02NS}]{stacks-project} and \cite[\href{https://stacks.math.columbia.edu/tag/02OS}{Tag 02OS}]{stacks-project}). 
    We take $n\gg 0$ such that $R^{d-1}\pi_*\sO_Y(-nH-E)=0$ by relative Serre vanishing. We write
    \begin{align*}
        nH=\sum_{i\in I}r_iE_i+G,
    \end{align*}
    for some positive integers $r_i>0$,
    where the $E_i$'s are $\pi$-exceptional components of $H$. Consider the short exact sequence
    \begin{align*}
        0\to\sO_Y(-nH-E)\to\sO_Y(-\sum_{i\in I}r_iE_i-E)\to\sO_G(-\sum_{i\in I}r_iE_i-E)\to 0.
    \end{align*}
    Since all fibers of $G \to \pi(G)$ have dimension at most $d - 2$, we have
    \[
    R^{d-1}\pi_{*}\sO_G(-\sum_{i\in I}r_iE_i-E)=0.
    \]
    Thus we obtain
    \[
    R^{d-1}\pi_{*}\sO_Y(-\sum_{i\in I}r_iE_i-E)=0.
    \]
    
    To conclude the proof, we are then left to show the following:
    \begin{cl}
        If $R^{d-1}\pi_{*}\sO_Y(-\sum_{i\in I} n_iE_i-E)=0$ for some $(n_i)_{i\in I}\in\Z_{\geq 0}$ satisfying $\sum_{i\in I} n_i\geq 1$, then there exists $j\in I$ such that $n_j\geq 1$ and \[R^{d-1}\pi_{*}\sO_Y\left(-(n_j-1)E_j-\sum_{i\in I\setminus\{j\}} n_iE_i-E\right)=0.\] 
    \end{cl}
    \noindent \textit{Proof of the claim.}
    Since $X$ is of lc type, there exists an effective $\Q$-divisor $B$ such that
    \[
    K_Y+\sum_{i\in I}a_iE_i+E+\pi_{*}^{-1}B=\pi^{*}(K_X+B)
    \]
    for some $a_i\in\Q_{\leq 0}$. 
    Let $J\coloneqq \{i\in I \mid n_i-a_i> 0\}\subseteq I$.
    Note that $J\neq \emptyset$ since $\sum_{i\in I} n_i\geq 1$ and $a_i\leq 0$. Let
    \begin{align*}
        t\coloneqq\underset{i\in J}{\max}\left\{\frac{n_i-a_i}{r_i}\right\}\in\Q_{>0},
    \end{align*}
    and let $j\in J$ be an index where the maximum is attained. We then have
    \[E_j\not\subseteq \Supp\left(t\left(\sum_{i\in I} r_iE_i\right)-\sum_{i\in J} (n_i-a_i )E_i\right),\]
    and thus
    \[
    -\left.\left(\sum_{i\in J} (n_i-a_i )E_i)\right)\right|_{E_j}=\left.\left(-tG-t\sum_{i\in I} r_iE_i\right)\right|_{E_j}+\left.\left(tG+t\sum_{i\in I} r_iE_i-\sum_{i\in J} (n_i-a_i )E_i\right)\right|_{E_j}
    \]
    is $\pi|_{E_j}$-big since $-G-\sum_{i\in I} r_iE_i$ is $\pi$-ample.
    Therefore,
    \begin{align*}
        -\left.\left(K_{Y}+\sum_{i\in I} n_iE_i+E\right)\right|_{E_j} & \sim_{\Q,\pi|_{E_j}} -\left.\left(\sum_{i\in I} (n_i-a_i)E_i\right)\right|_{E_j}+\pi^{-1}_{*}B|_{E_j}\\
        &= -\left.\left(\sum_{i\in J} (n_i-a_i )E_i\right)\right|_{E_j}+\left.\left(\sum_{i\in I\setminus J} (a_i-n_i )E_i\right)\right|_{E_j}+\pi^{-1}_{*}B|_{E_j}
    \end{align*}
    is also $\pi|_{E_j}$-big.

    Consider the short exact sequence
    \begin{multline*}
       0\to \sO_Y\left(-\sum_{i\in I} n_iE_i-E\right)\to  \sO_Y\left(-(n_j-1)E_j-\sum_{i\in I\setminus\{j\}} n_iE_i-E\right)\\\to \sO_{E_j}\left(E_j-\sum_{i\in I} n_iE_i-E\right)\to 0. 
    \end{multline*}
     Since we aim to show that \[R^{d-1}\pi_{*}\sO_Y\left(-(n_j-1)E_j-\sum_{i\in I\setminus\{j\}} n_iE_i-E\right)=0,\] it suffices to show that 
     \[
     R^{d-1}\pi_{*}\sO_{E_j}\left(E_j-\sum_{i\in I} n_iE_i-E\right)=0.
     \]
    If $\dim \pi(E_j)>0$, then this is immediate since then fibers of $E_j \to \pi(E_j)$ have dimension $\leq d - 2$. If $\dim \pi(E_j)=0$, then
    \begin{align*}
       R^{d-1}\pi_{*}\sO_{E_j}\left(E_j-\sum_{i\in I} n_iE_i-E\right)&=H^{d-1}\left(E_j, \sO_{E_j}\left(E_j-\sum_{i\in I} n_iE_i-E\right)\right)\\
    &\cong H^0\left(E_j, \sO_{E_j}\left(K_{E_j}-E_j+\sum_{i\in I} n_iE_i+E\right)\right)^{\vee} \\
    &\cong H^0\left(E_j, \sO_{E_j}\left(K_{Y}+\sum_{i\in I} n_iE_i+E\right)\right)^{\vee}.
    \end{align*}
    Since $-\left(K_{Y}+\sum_{i\in I} n_iE_i+E\right)$ is $\pi|_{E_j}$-big, we conclude.

\end{proof}


\section*{Acknowledgements}
The author was supported by JSPS KAKENHI Grant Number JP24K16897 and by the Inamori Foundation.

\bibliography{hoge.bib}
\bibliographystyle{alpha}

\bigskip

\end{document}